\documentclass{birkjour}

\usepackage{amssymb}
\usepackage{physics}
\usepackage{bm}

 \newtheorem{thm}{Theorem}[section]
 \newtheorem{cor}[thm]{Corollary}
 
 \newtheorem{prop}[thm]{Proposition}
 \theoremstyle{definition}
 \newtheorem{defn}[thm]{Definition}
 \theoremstyle{remark}
 \newtheorem{rem}[thm]{Remark}
 
 \numberwithin{equation}{section}

\begin{document}

%-------------------------------------------------------------------------
% editorial commands: to be inserted by the editorial office
%
%\firstpage{1} \volume{228} \Copyrightyear{2004} \DOI{003-0001}
%
%
%\seriesextra{Just an add-on}
%\seriesextraline{This is the Concrete Title of this Book\br H.E. R and S.T.C. W, Eds.}
%
% for journals:
%
%\firstpage{1}
%\issuenumber{1}
%\Volumeandyear{1 (2004)}
%\Copyrightyear{2004}
%\DOI{003-xxxx-y}
%\Signet
%\commby{inhouse}
%\submitted{March 14, 2003}
%\received{March 16, 2000}
%\revised{June 1, 2000}
%\accepted{July 22, 2000}
%
%
%
%---------------------------------------------------------------------------
%Insert here the title, affiliations and abstract:
%

\title[Global Well-Posedness for Eddy-Mean Vorticity Equations]{Global Well-Posedness for\\ Eddy-Mean Vorticity Equations on  $\mathbb{T}^2$}

%----------Author 1

\author[Y. Cacchio']{Yuri Cacchio'}

\address{%
SBAI Sapienza\\
Via Antonio Scarpa, 14\\
00161 Roma\\
Italy}

\email{yuri.cacchio@uniroma1.it}

\thanks{The author was supported in part by Sapienza "Giovani Ricercatori" Grant DR n.1607 }
%----------classification, keywords, date
\subjclass{35Q86}

\keywords{Geophysical fluid dynamics, Well-Posedness, Vorticity equation}

\date{\today}
%----------additions
%\dedicatory{To my boss}
%%% ----------------------------------------------------------------------

\begin{abstract}
We consider the two-dimensional, $\beta$-plane, eddy-mean vorticity equations for an  incompressible flow, where  the zonally averaged flow varies on scales much larger than the perturbation. We prove global existence and uniqueness of the solution to the equations on periodic settings.
\end{abstract}

%%% ----------------------------------------------------------------------
\maketitle
%%% ----------------------------------------------------------------------
%\tableofcontents
\section{Introduction}

Atmospheric and oceanic flows on a rotating sphere are often modeled using equations for a single fluid layer of constant density as a consequence of their large horizontal extent compared with their depth. However, on large scale these flows are strongly influenced not only by  rotation but also by stratification \cite{Batchelor,Cope,Bertozzi,McWilliams,Pope}.

In three-dimensional stratified configurations turbulence naturally arises as a result of dynamical instabilities such as baroclinic instability, however, there is no such underlying instability in the two-dimensional case \cite{Cope}. Consequently, it is common to artificially force two-dimensional turbulence through an exogenous, statistically homogeneous, forcing function. A state of equilibrium can then be achieved by the inclusion of dissipation terms \cite{Liu, ScottPolv}.

In this article we take into account only the rotation by neglecting stratification in a single layer model where we incorporate the effects of planetary rotation by adopting a beta-plane approximation, which is a simple device used to represent the latitudinal variation in the vertical component of the planetary rotation \cite{holton,pedlosky,vallis1}.

Keeping this in mind, let us start by considering the two-dimensional beta-plane vorticity equation
\begin{equation}\label{vorticityeq}
    \partial_t\zeta+J(\psi,\zeta+\beta y)=\nu\nabla^2\zeta,
\end{equation}
where $J(A,B)=A_xB_y-A_yB_x$, which is completely determined by a single dynamical variable, the stream function $\psi$, since the vorticity $\zeta=\nabla^2 \psi$.

Equation \eqref{vorticityeq} has been used in a wide range of studies investigating large scale planetary flows in double periodic geometry \cite{Cost,Danilov,Smith,vallis4} and on the sphere \cite{huang,nozawa,scott}. 

Introducing the eddy-mean decomposition, sometimes called  Reynolds decomposition, denoted by a prime and an overbar respectively,
\begin{equation}\label{average}
    f(t,x,y)=\overline{f}(t,y)+f'(t,x,y),
\end{equation}
we can split equation \eqref{vorticityeq} into mean and fluctuating components obtaining the following initial value problem

\begin{equation}\label{generalProblem}
    \left\{\begin{array}{ll}
         &\partial_t \zeta'+C_1\partial_x\nabla^{-2}\zeta'-\nu\nabla^2\zeta'=F(\overline{u},\zeta') \\
         &\partial_t\overline{u}-\nu \nabla^2\overline{u}=G(\zeta')\\
         &\zeta'(0,x,y)=\zeta'_0(x,y) \\
         &\overline{u}(0,y)=\overline{u}_0(y)
    \end{array}
    \right.
\end{equation}
with
\begin{align*}
    F(\overline{u},\zeta')&:=(\partial_y\nabla^{-2}\zeta')\zeta'_x-(\partial_x\nabla^{-2}\zeta')\zeta'_y-\overline{u}\zeta'_x+\partial_y\overline{[(\partial_x\nabla^{-2}\zeta')\zeta']},\\
    G(\zeta')&:=\partial_y\overline{[\partial_x\nabla^{-2}\zeta'\partial_y\nabla^{-2}\zeta']}.
\end{align*}
We denote by $\overline{u}=\overline{u}(t,y)$ the zonal mean velocity, also called jet velocity profile, and by $\zeta'=\zeta'(t,x,y)$ the eddy-vorticity; the parameter $\nu>0$ is called kinematic viscosity.
Here, the system of equations \eqref{generalProblem} is posed on a periodic box (periodic boundary condition) $\mathbb{T}^2_l=[0,l)^2$ of size $l>0$.

In the next section we see a more detailed description of the model \eqref{vorticityeq} and how to get system \eqref{generalProblem} from equation \eqref{vorticityeq}.

The goal of the paper is to prove existence and uniqueness of the solution of problem \eqref{generalProblem} in the periodic setting, which allows us to determine the Reynolds stress as shown in Section 2. The latter quantity turns out to be crucial in fluid dynamics since it is the component of the total stress tensor in a fluid to account for turbulent fluctuations in fluid momentum. Moreover, as written in \cite[p. 414]{vallis1}, the "closure problem" of turbulence may be thought of as finding a representation of such Reynolds stress terms in terms of mean flow quantities, which seems to be critical without introducing physical assumptions not directly deducible from the equations of motion themselves.

In light of that we prove the following global well-posedness result:
\begin{thm}\label{theoprincipale}
If $\overline{u}_0\in H^s(\mathbb{T}_l)$ and $\zeta'_0\in H^s(\mathbb{T}^2_l)$, $s\geq0$, then there exists a unique global solution $(\overline{u},\zeta')\in L^{\infty}\left((0,+\infty);\ H^s(\mathbb{T}_l)\cross H^s(\mathbb{T}^2_l)\right)$ of the initial value problem $\eqref{generalProblem}$.
\end{thm}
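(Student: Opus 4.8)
The plan is to combine a short-time existence result, obtained from the Duhamel formula and the smoothing of the linear semigroup, with global-in-time a priori bounds that rest on the enstrophy structure \eqref{generalProblem} inherits from \eqref{vorticityeq}. Write $\mathcal{L}:=\nu\nabla^2-C_1\partial_x\nabla^{-2}$ for the spatial operator in the first equation. On $\mathbb{T}^2_l$ the operator $\partial_x\nabla^{-2}$ is skew-adjoint and commutes with $\nabla^2$, so $e^{t\mathcal{L}}$ is the heat semigroup composed with a unitary group and retains the parabolic smoothing $\|e^{t\mathcal{L}}f\|_{H^{\sigma+\theta}}\lesssim t^{-\theta/2}\|f\|_{H^{\sigma}}$ for $0<t\le1$, $\theta\ge0$. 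Recasting \eqref{generalProblem} in the mild form
\begin{align*}
\zeta'(t)&=e^{t\mathcal{L}}\zeta'_0+\int_0^t e^{(t-\tau)\mathcal{L}}F(\overline{u},\zeta')(\tau)\,d\tau,\\
\overline{u}(t)&=e^{t\nu\nabla^2}\overline{u}_0+\int_0^t e^{(t-\tau)\nu\nabla^2}G(\zeta')(\tau)\,d\tau,
\end{align*}
one runs a contraction mapping argument in $C([0,T];H^s(\mathbb{T}_l)\times H^s(\mathbb{T}^2_l))$, supplemented for small $s$ (where $H^s$ is not an algebra) by a norm of the type $\sup_{0<t<T}t^{\theta/2}\|\cdot(t)\|_{H^{s+\theta}}$. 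Here one uses that $F$ and $G$ are quadratic, that $\nabla^{-2}$ gains two derivatives, and that the Jacobian contributions can be written in divergence form $J(\psi',\zeta')=\nabla\cdot(\zeta'\nabla^{\perp}\psi')$ with $\nabla^{\perp}\psi'$ divergence free; hence $F,G$ cost at most one derivative relative to $H^s$, paid for by $e^{(t-\tau)\nabla^2}$ at the integrable rate $(t-\tau)^{-1/2}$. This yields, for data in $H^s$ with $s\ge0$, a unique maximal solution on $[0,T_{\max})$, smooth for $t>0$, with the blow-up alternative that $T_{\max}<\infty$ forces $\|(\overline{u},\zeta')(t)\|_{H^s\times H^s}\to\infty$.

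Next come the a priori bounds, starting at the $L^2$ level. Testing the first equation with $\zeta'$ and the second with $\overline{u}$ and integrating over the torus, the skew term $C_1\partial_x\nabla^{-2}\zeta'$ contributes nothing; the term $\overline{u}\zeta'_x=\partial_x(\overline{u}\zeta')$ integrates to zero in $x$ since $\overline{u}=\overline{u}(y)$; the Jacobian terms vanish by incompressibility; and the nonlocal zonal averages $\partial_y\overline{[\,\cdot\,]}$, being functions of $y$ alone, are orthogonal to $\zeta'$, which has zero zonal mean. What survives is
\begin{align*}
\tfrac12\tfrac{d}{dt}\|\zeta'\|_{L^2}^2&=-\nu\|\nabla\zeta'\|_{L^2}^2,\\
\tfrac12\tfrac{d}{dt}\|\overline{u}\|_{L^2}^2&\le-\nu\|\partial_y\overline{u}\|_{L^2}^2+\bigl\|\overline{[\partial_x\nabla^{-2}\zeta'\,\partial_y\nabla^{-2}\zeta']}\bigr\|_{L^2}\|\partial_y\overline{u}\|_{L^2}.
\end{align*}
The first identity gives $\|\zeta'(t)\|_{L^2}\le\|\zeta'_0\|_{L^2}$ for all $t$ and $\int_0^{\infty}\|\nabla\zeta'\|_{L^2}^2\,dt\le\tfrac1{2\nu}\|\zeta'_0\|_{L^2}^2$. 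The forcing in the second line is bounded, by Ladyzhenskaya's inequality in two dimensions, by $\|\nabla\nabla^{-2}\zeta'\|_{L^4}^2\lesssim\|\zeta'\|_{H^{-1}}\|\zeta'\|_{L^2}$, hence uniformly in $t$; splitting $\overline{u}$ into its (time-independent, by conservation of $\int\overline{u}\,dy$) mean and a mean-free part on which the Poincaré inequality applies then yields $\sup_{t\ge0}\|\overline{u}(t)\|_{L^2}\le C(\overline{u}_0,\zeta'_0,\nu,l)$ and $\int_0^{\infty}\|\partial_y\overline{u}\|_{L^2}^2\,dt<\infty$. Together with the local theory this settles the case $s=0$.

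For $s>0$ one bootstraps. Assume inductively that $(\overline{u},\zeta')$ is bounded in $L^{\infty}((0,\infty);H^{k-1}\times H^{k-1})$ and that $\|\nabla^k\zeta'\|_{L^2}^2$ and $\|\partial_y^k\overline{u}\|_{L^2}^2$ are uniformly locally integrable in time (the case $k=1$ being the previous step). Applying $\nabla^k$ to the first equation and testing with $\nabla^k\zeta'$, the skew term again drops, the viscous term gives $-\nu\|\nabla^{k+1}\zeta'\|_{L^2}^2$, and the nonlinear and coupling terms are handled by Kato--Ponce/commutator inequalities and Gagliardo--Nirenberg interpolation; each is bounded either by $\tfrac{\nu}{2}\|\nabla^{k+1}\zeta'\|_{L^2}^2$ or by (a uniformly bounded factor)$\,\cdot\,\|\nabla^k\zeta'\|_{L^2}^2$, the algebraic points being that $\overline{u}\zeta'_x=\partial_x(\overline{u}\zeta')$ kills the top-order piece $\int\overline{u}\,\nabla^k\zeta'\,\partial_x\nabla^k\zeta'$, and likewise for the advective term. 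A uniform Gronwall argument, fed by the local integrability of $\|\nabla^k\zeta'\|_{L^2}^2$, then gives $\sup_{t\ge0}\|\nabla^k\zeta'(t)\|_{L^2}<\infty$ and promotes $\|\nabla^{k+1}\zeta'\|_{L^2}^2$ to being uniformly locally integrable. For $\overline{u}$ the step is easier: its equation is linear in $\overline{u}$ with forcing controlled in $H^{k-1}(\mathbb{T}_l)$ by $\|\zeta'\|^2_{H^{k-1}(\mathbb{T}^2_l)}$, and the mean-free derivative $\partial_y^k\overline{u}$ is damped by viscosity and Poincaré, giving a uniform bound directly. Iterating over $k$ gives uniform-in-time bounds in $H^k\times H^k$ for all integers $k\ge0$, and interpolation covers all real $s\ge0$. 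These a priori bounds — rigorous on $(0,\infty)$ by the instantaneous smoothing of Step 1, and transported to $t=0$ by mollifying the data and passing to the limit with lower semicontinuity of the $L^{\infty}_tH^s$ norm — preclude finite-time blow-up, so $T_{\max}=\infty$ and $(\overline{u},\zeta')\in L^{\infty}((0,\infty);H^s(\mathbb{T}_l)\times H^s(\mathbb{T}^2_l))$. Uniqueness follows by writing the equations for the difference of two solutions with the same data, which is quadratic in the unknowns, and closing a Gronwall estimate in $L^2\times L^2$ using the bounds already established for the two solutions (for $s=0$ one works on $(0,\infty)$ and uses continuity in $L^2$ down to $t=0$; equivalently, uniqueness is inherited from the local contraction). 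I expect the higher-order a priori estimates to be the main obstacle: one must exploit simultaneously the parabolic damping, the skew-symmetry of the Rossby term, the divergence form of the Jacobian, and the fact that the zonal-average terms couple the two unknowns only through lower-order quantities that are absorbable after integration by parts, in order to obtain bounds uniform in $t$ rather than merely finite on bounded time intervals.
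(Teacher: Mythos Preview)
Your proposal is correct but differs from the paper in each of its three stages. For local existence the paper works purely on the Fourier side, extracting smoothing via the factor $(\nu|k|^2(t-t'))^\alpha$ with $\alpha\in(\tfrac34,1)$ and closing the bilinear convolution estimates by a case analysis on the relative sizes of the interacting frequencies $|h|,|m|,|k|$; you instead use the Kato weighted-in-time norm $\sup_t t^{\theta/2}\|\cdot\|_{H^{s+\theta}}$, which is an equally standard but structurally different device. For the global $L^2$ bound the paper does not test the split system at all: it returns to the undecomposed equation \eqref{vorticityeq}, derives the energy and enstrophy balances by multiplying by $\psi$ and by $\zeta$, and only afterwards separates mean and eddy contributions to read off bounds on $\|\overline u\|_{L^2}$ and $\|\zeta'\|_{L^2}$; your direct estimate on \eqref{generalProblem}, exploiting the orthogonality of the zonal-average forcing $\partial_y\overline{[\,\cdot\,]}$ to the zonally mean-free $\zeta'$, is cleaner and yields the sharper dissipation identity $\tfrac12\tfrac{d}{dt}\|\zeta'\|_{L^2}^2=-\nu\|\nabla\zeta'\|_{L^2}^2$. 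For the passage to $H^s$ with $s>0$ the paper performs no higher-order energy estimates: it asserts that the local lifespan $\delta$ can be taken to depend only on $\|\gamma_0\|_{L^2}+\|\mu_0\|_{L^2}$ (equation \eqref{deltariscritto}) and then iterates the $H^s$ local theory with this uniform step, whereas you bootstrap level by level with Kato--Ponce commutators and a uniform Gronwall lemma. The paper's route is shorter once its Fourier bilinear machinery is in place; yours is more PDE-classical, more self-contained, and delivers explicit uniform-in-time $H^k$ control that the paper's iteration does not directly produce.
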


The proof of the above theorem is first obtained locally in time via a contraction method, and then extended using an iteration method based on the conservation of the $L^2$-norm. The estimates used in the argument are reminiscent of those used to study periodic dispersive initial value problems 
%conducted using a consists of an application of the currently known "I-method" developed by Colliander, Kell, Staffilani, Takaoka and Tao for other non-linear Cauchy problems, see for example \cite{Team2,Team1,Team4,Team3}, 
and  inspired by the pioneering works of Bourgain \cite{Bourgain1,Bourgain2,Bourgain4} and Kenig, Ponce and Vega \cite{vega1,vega2}. Relevant results in periodic settings can also be found in \cite{Bourgain3,Staffilani}. We also refer the interested reader to Tao's notes \cite{TaoNote} for an introduction to this method, where he shows local well-posedness of the Navier-Stokes equations.

Therefore, we proceed as follows:
\begin{itemize}
    \item [1)]\textbf{Duhamel's principle.} Since in Theorem \ref {theoprincipale} we assume very low regularity it is clear that the initial value problem \eqref{generalProblem} needs to be interpreted in an appropriate manner. To this end the first step is to use the Duhamel's principle so that the solution to problem \eqref{generalProblem} can be interpreted as the solution of the integral system
    \begin{align}\label{duha}
        (\overline{u},\zeta')=\bigg(&e^{\nu t\partial_{yy}}\overline{u}_0+\int_0^t e^{(\nu\partial_{yy})(t-t')}G(\zeta')\ dt',\\
        &e^{(\nu\nabla^2-\partial_x\nabla^{-2})t}\left.\zeta'_0+\int_0^t e^{(\nu\nabla^2-\partial_x\nabla^{-2})(t-t')}F(\overline{u},\zeta')\ dt'\right)\notag
    \end{align}
    \item[2)] \textbf{Upper bounds.} The second step is to define the  functional  
     \begin{eqnarray*}
        \Phi(\overline{u},\zeta'):=\bigg(&e^{\nu t\partial_{yy}}\overline{u}_0+\int_0^t e^{(\nu\partial_{yy})(t-t')}G(\zeta')\ dt',\\
        &e^{(\nu\nabla^2-\partial_x\nabla^{-2})t}\left.\zeta'_0+\int_0^t e^{(\nu\nabla^2-\partial_x\nabla^{-2})(t-t')}F(\overline{u},\zeta')\ dt'\right)
    \end{eqnarray*}   
  and prove that 
     \begin{equation}\label{stimenormasoluzione}
        \norm{ \Phi(\overline{u},\zeta')}_{L^\infty_{[0,\delta]}(H^s(\mathbb{T}_l)\cross H^s(\mathbb{T}_l^2))}\leq C(\norm{\overline{u}_0}_{H^s(\mathbb{T}_l)}+\norm{\zeta'_0}_{H^s(\mathbb{T}_l^2)}),
    \end{equation}
    $\forall \ s \geq0$, on a small enough time interval $[0,\delta]$.  Similarly one also proves that $\Phi$ is a contraction. In this way we 
    have that 
    \begin{equation*}
        \Phi:B(0,R)\to B(0,R)
    \end{equation*}
    on a suitable ball of radius $R>0$ and contraction.
       \item[3)]\textbf{Local Well-Posedness.} The next step is to use the fixed point theorem on the ball defined in step 2) in order to prove existence and uniqueness of the solution on $B(0,R)$. After that, due to the estimate \eqref{stimenormasoluzione}, we extend this solution from the ball over the whole space $L^\infty_{[0,\delta]}(H^s(\mathbb{T}_l)\cross H^s(\mathbb{T}_l^2))$, with $s\geq0$.
    \item[4)]\textbf{Global Well-Posedness.} Finally, since the small time $\delta$ in Step 2 depends only on the $L^2$-norms of $\overline{u}$ and $\zeta'$, and these can be proved to be  bounded by $L^2$-norms of initial data, i.e.,
    \begin{equation}
        \norm{\overline{u}}_{L^2_y(\mathbb{T}_l)}\leq C \norm{\overline{u}_0}_{L^2_y(\mathbb{T}_l)} \text{ and } \norm{\zeta'}_{L^2_{xy}(\mathbb{T}_l^2)}\leq C \norm{\zeta'_0}_{L^2_{xy}(\mathbb{T}_l^2)},
    \end{equation}
    for some universal constant $C$, and we can extend the solution by covering the whole time interval $[0,\infty)$ by iteration.
\end{itemize}

The organization of the paper is as follows. In Section 2, we state assumptions, certain terminology and we derive from the vorticity equation \eqref{vorticityeq} the system of equations \eqref{generalProblem}.
In Section  3, as mentioned in Steps 2 and 3 above, we prove inequality \eqref{stimenormasoluzione}, we properly define the functional $\Phi$, and after proving that it is a contraction we obtain local well-posedness on a small enough time interval.
Finally, we extend the solution by iteration to get the global well-posedness result of \eqref{generalProblem} in the last section.

\section{The Eddy-Mean Decomposition and Assumptions}
To establish the notation used in this paper, we start with the incompressible 2D Navier-Stokes equations \cite{Batchelor,kuk,salmon,sriniv},
\begin{equation}\label{NS}
    \left\{\begin{array}{rl}
      \partial_t\bm{u}+(\bm{u}\cdot \nabla)\bm{u}+f\Vec{\bm{z}}\cross \bm{u}&=\nu\nabla^2\bm{u}-\nabla p\\
    \div{\bm{u}}&=0
    \end{array}
    \right.
\end{equation}
where we include the effect of the planetary rotation through the Coriolis force $f$ as a tool to understand various geophysical flows \cite{mustafa,gallagher2,gallagher1,rhines}. Here $\bm{u}=(u,v)$ and $p$ are unknown velocity field and pressure, $\nu>0$ is the kinematic viscosity and $(\bm{u}\cdot\nabla)$ stands for the differential operator $u\partial_x+v\partial_y$. 

For the purpose of this discussion, we do an important approximation, so-called $\beta$-$plane$,  which captures the most important dynamical effects of sphericity, without the complicating geometric effects, which are not essential to describe many phenomena \cite{vallis1}. Since the magnitude of the vertical component of rotation varies with latitude, we can approximate this effect by allowing the effective rotation vector to vary. Thus, Taylor-expanding the Coriolis parameter around a latitude $\varTheta_0$, for small variation in latitude, we have \cite{holton,pedlosky,salmon} 
\begin{equation}
    f=2\Omega\sin\varTheta\approx 2\Omega\sin{\varTheta_0}+2\Omega(\varTheta-\varTheta_0)\cos{\varTheta_0},
\end{equation}
where $\varTheta$ is the latitude and $\Omega$ is the angular velocity of the sphere. Then, on the tangent plane we may mimic this by allowing the Coriolis parameter to vary as
\begin{equation}
    f=f_0+\beta y
\end{equation}
where $f_0=2\Omega\sin{\varTheta_0}$, $\beta=\partial f/\partial y=(2\Omega\cos{\varTheta_0})/a$ and $a$ is the radius of the planet.

Moreover, since in 2D flows the velocity field has two component which depend on two physical space coordinates and time, $\bm{u}=(u(t,x,y),v(t,x,y),0)$, then the vorticity field $\bm{\zeta}$ has only one non-zero component,
\begin{equation}\label{vorticita}
    \bm{\zeta}:=\nabla\cross \bm{u}=(0,0,\zeta(t,x,y))=(0,0,v_x-u_y).
\end{equation}
This component satisfies the $\beta$-plane vorticity equation, %nella tesi metti tutti i conti per derivare questa equazione 
\begin{equation}\label{equazionevorticita}
    \zeta_t+u\zeta_x+v\zeta_y+\beta v=\nu\nabla^2\zeta,
\end{equation}
which is obtained by taking the curl from \eqref{NS}.

For 2D incompressible flows, we can introduce a representation of the velocity field in terms of the stream function $\psi(t,x,y)$ as in  \cite{Nazarenko},
\begin{align}
    (u,v)&=(-\psi_y,\psi_x),\label{velocitarelaz}\\
    \zeta=\nabla^2\psi&=\psi_{xx}+\psi_{yy}\label{vorticitarelaz}.
\end{align}
In addition, since we work in a periodic frame, we can use the eddy-mean decomposition \eqref{average} in \eqref{equazionevorticita}, where 
\begin{equation*}
    \overline{f}=\frac{1}{l}\int_0^l f \ dx
\end{equation*}
and get the zonal mean momentum equation%in tesi qui metti tutti i conti 
\begin{equation}\label{zonalmomentumequation}
    \partial_t\overline{u}+\partial_y(\overline{u'v'})=\nu \nabla^2\overline{u},
\end{equation}
and the eddy vorticity equation
\begin{equation}\label{eddyvorticityequation}
    \partial_t\zeta'+\overline{u}\zeta'_x+(\beta-\overline{u}_{yy})v'=\nu\nabla^2\zeta'+\partial_y(\overline{v'\zeta'})-u'\zeta'_x-v'\zeta_y'.
\end{equation}
The last hypothesis we make is to assume constant $\overline{u}_{yy}$, that is we assume that the zonally averaged flow varies on scales much larger than the perturbation \cite{Cost,sriniv}.

Finally, from \eqref{average}, \eqref{velocitarelaz} and \eqref{vorticitarelaz} we obtain %using the following two relations,
\begin{align}\label{Reystress}
     u'&=-\partial_y\nabla^{-2}\zeta';\notag\\
    v'&=\partial_x\nabla^{-2}\zeta',
\end{align}
and we derive the initial value problem \eqref{generalProblem} by replacing relations \eqref{Reystress} in \eqref{zonalmomentumequation} and \eqref{eddyvorticityequation}.

As mentioned in the introduction, proving well posedness for \eqref{generalProblem} will allow us to derive the Reynolds stress using relations \eqref{Reystress}, since it is defined as $\overline{u'v'}$ \cite{McWilliams}.

Before we address the computations of well posedness, with the purpose of fixing the functions space on which we work, we show that the average of $\overline{u}$ and $\zeta'$ are both conserved. In fact, for $\overline{u}$ we have 
\begin{align*}
    \partial_t\int_{0}^l\overline{u}\ dy&=\int_{0}^l\partial_t\overline{u}\ dy\\
    &=\int_{0}^l\nu \partial^2_y\overline{u}+\partial_y\overline{[\partial_x\nabla^{-2}\zeta'\partial_y\nabla^{-2}\zeta']}\ dy\\
    &=\nu\left[\partial_y\overline{u}\right]^l_{0}+\left[\overline{[\partial_x\nabla^{-2}\zeta'\partial_y\nabla^{-2}\zeta']}\right]^l_{0}=0
\end{align*}
which vanishes by using periodic boundary conditions. On the other hand, for $\zeta'$, recalling that $\zeta=\zeta'+\overline{\zeta}$ with $\overline{\zeta}=\frac{1}{l}\int_0^l \zeta \ dx$, we have
\begin{equation*}
     \int_0^l\int_0^l\zeta'\ dxdy= \int_0^l\int_0^l\zeta\ dxdy- \int_0^l\int_0^l\overline{\zeta}\ dxdy=l\int_0^l\overline{\zeta}\ dy-l\int_0^l\overline{\zeta}\ dy=0.
\end{equation*}
Then, 
\begin{equation}\label{condfour2}
    \widehat{\zeta'}(0)= \int_0^l\int_0^l\zeta'(x,y)e^{i0\cdot(x,y)}\ dxdy= \int_0^l\int_0^l \zeta'(x,y)\ dxdy=0,
\end{equation}
and if we define
\begin{equation*}
    c_0=\int_{0}^l\overline{u}\ dy,
\end{equation*}
we have,
\begin{equation}\label{condfour1}
    c_0=\int_{0}^l\overline{u}\ dy=\int_{0}^l\overline{u}(y)e^{i0\cdot y}\ dx=\widehat{\overline{u}}(0).
\end{equation}
We set 
\begin{equation*}
    \mu:=\overline{u}-c_0
\end{equation*}
so that 
\begin{equation}\label{condizionefour}
    \widehat{\mu}(0)=0.
\end{equation}
Moreover, using relation \eqref{zonalmomentumequation},
\begin{align}\label{riscritto}
    \partial_t \mu=\partial_t\overline{u}&=\nu \nabla^2\overline{u}+\partial_y\overline{[\partial_x\nabla^{-2}\zeta'\partial_y\nabla^{-2}\zeta']}\notag\\
    &=\nu \nabla^2 \mu+\partial_y\overline{[\partial_x\nabla^{-2}\zeta'\partial_y\nabla^{-2}\zeta']}.
\end{align}
Then $\mu$ satisfies equation \eqref{zonalmomentumequation} with the additional condition \eqref{condizionefour}.
Due to the above remarks, we study the initial value problem \eqref{generalProblem} equivalently in the following functions spaces.

\begin{defn}
    \begin{equation*}
    X^{s}:=\left\{(f,g)\in L^\infty\left((0,\infty);\ H^s(\mathbb{T}_l)\cross H^s(\mathbb{T}^2_l)\right); \widehat{f}(0)=0,\ \widehat{g}(0)=0\right\},
    \end{equation*}
    and in a similar way
    \begin{equation*}
        X^{s,\delta}:=\left\{(f,g)\in L^\infty\left([0,\delta];\ H^s(\mathbb{T}_l)\cross H^s(\mathbb{T}^2_l)\right); \widehat{f}(0)=0,\ \widehat{g}(0)=0\right\},
    \end{equation*}
    equipped with norm
    \begin{align*}
        \norm{(f,g)}_{X^s}&=\norm{f}_{L^\infty_{[0,\infty)} H^s(\mathbb{T}_l)}+\norm{g}_{L^\infty_{[0,\infty)} H^s(\mathbb{T}_l^2)},\\
        \norm{(f,g)}_{X^{s,\delta}}&=\norm{f}_{L^\infty_{[0,\delta]} H^s(\mathbb{T}_l)}+\norm{g}_{L^\infty_{[0,\delta]} H^s(\mathbb{T}_l^2)},
    \end{align*}
    respectively.
\end{defn}
Via  Plancherel's theorem we express the $\norm{\cdot}_{H^s}$-norm in the Fourier modes, namely 
\begin{equation*}
    \norm{f}^2_{H^s}=\sum_k \langle k\rangle^{2s}|\widehat{f}|^2,
\end{equation*}
where $\langle k\rangle:=(1+|k|^2)^{\frac{1}{2}}$ is the Japanese bracket.

\textbf{Notation 1}: 
Throughout the paper we use $A\lesssim B$ to denote an estimate of the form $A \leq CB$ for some absolute constant $C$. If $A \lesssim B$ and $B \lesssim A$ we write $A \sim B$.

\textbf{Notation 2}: 
From here on we use the following notations,
\begin{align*}
    \norm{\cdot}_{L^\infty_{[0,\infty)}}&=\norm{\cdot}_{L^\infty_t}\\
    \norm{\cdot}_{L^\infty_{[0,\delta]}}&=\norm{\cdot}_{L^\infty_{\delta}}\\
    \norm{\cdot}_{H^s(\mathbb{T}^2_l)}&=\norm{\cdot}_{H^s_{xy}}.
\end{align*}

\section{Local Well-Posedness}
In this section we prove well posedness of \eqref{generalProblem} on the space $X^{s,\delta}$. In order to do this, we first set the initial data of the problem. We fix 
\begin{equation}
    \left(\zeta'_0(x,y), \overline{u}_0(y)\right)\in H^s(\mathbb{T}^2_l)\cross H^s(\mathbb{T}_l).
\end{equation}
Due to \eqref{riscritto}, we equivalently rewrite \eqref{generalProblem} in the following form 
\begin{equation}\label{sistemariscritto}\left\{
    \begin{array}{rl}
     \partial_t \mu-\nu\partial_{yy} \mu &=G(\gamma), \\
    \partial_t \gamma+C_1\partial_x\nabla^{-2}\gamma-\nu\nabla^2\gamma&=F(\mu,\gamma), \\
    \mu(0,y)&=\mu_0(y)\in H^s(\mathbb{T}_l),\\
    \gamma(0,x,y)&=\gamma_0(x,y)\in H^s(\mathbb{T}^2_l),
\end{array}
\right.
\end{equation}
where we define,
\begin{align}\label{notatmugamma}
    \mu(t,y)&:=\overline{u}(t,y)-c_0,\notag\\
    \gamma(t,x,y)&:=\zeta'(t,x,y),\\
    G(\gamma)&:=\partial_y\overline{[\partial_x\nabla^{-2}\gamma\partial_y\nabla^{-2}\gamma]},\notag\\
    F(\mu,\gamma)&:=(\partial_y\nabla^{-2}\gamma)\gamma_x-(\partial_x\nabla^{-2}\gamma)\gamma_y-\mu\gamma_x+c_0\gamma_x+\partial_y\overline{[(\partial_x\nabla^{-2}\gamma)\gamma]}.\notag
\end{align}
We now use the Duhamel's principle so that the solution to problem \eqref{sistemariscritto} can be interpreted as the solution of the integral system, 
\begin{align}
        (\mu,\gamma)=\bigg(&e^{\nu t\partial_{yy}}\mu_0+\int_0^t e^{(\nu\partial_{yy})(t-t')}G(\gamma)\ dt',\\
        &e^{\tilde{D}^2t}\left.\gamma_0+\int_0^t e^{\tilde{D}^2(t-t')}F(\mu,\gamma)\ dt'\right)\notag
\end{align}
with $\tilde{D}^2=\nu\nabla^2-C_1\partial_x\nabla^{-2}$. It is not restrictive to assume $C_1=1$ as we will see below.

\subsection{ Bounds and estimates}
We proceed by deriving estimates of the norm of the various terms of the Duhamel expressions above.
\subsubsection{Zonal-Mean Momentum Equation Estimate}
Let us start with the equation for $\mu$.

\begin{prop}\label{prop1u}
Let $t\in [0,\delta]$, $\delta\in \mathbb{R}_{+}$, $s\geq0$, $\alpha\in(\frac{1}{2},1)$. Then,
\begin{equation*}
    \norm{\int_0^t e^{(t-t')\nu\partial_{yy}}G(\gamma)\ dt'}_{L^\infty_\delta H_{y}^s}\lesssim \delta^{1-\alpha}\norm{\gamma}^2_{L^\infty_\delta H^s_{xy}}.
    \end{equation*}
\end{prop}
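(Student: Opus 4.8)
The plan is to estimate the Duhamel term in $H^s_y$ by moving to Fourier modes in $y$, exploiting the smoothing of the heat semigroup $e^{(t-t')\nu\partial_{yy}}$ to absorb the derivative $\partial_y$ sitting in front of $G(\gamma)$, and then bounding the resulting time integral by Hölder in $t'$. Concretely, write $G(\gamma) = \partial_y g$ with $g := \overline{[\partial_x\nabla^{-2}\gamma\,\partial_y\nabla^{-2}\gamma]}$. On the Fourier side the $k$-th mode ($k\neq 0$, using $\widehat{G(\gamma)}(0)=0$) of the Duhamel term is $\int_0^t e^{-\nu k^2(t-t')}(ik)\,\widehat g(t',k)\,dt'$, so that
\begin{equation*}
\norm{\int_0^t e^{(t-t')\nu\partial_{yy}}G(\gamma)\,dt'}_{H^s_y}^2 = \sum_{k\neq 0}\langle k\rangle^{2s}\left|\int_0^t e^{-\nu k^2(t-t')}(ik)\,\widehat g(t',k)\,dt'\right|^2.
\end{equation*}
I would bound the inner integral by $|k|\int_0^t e^{-\nu k^2(t-t')}|\widehat g(t',k)|\,dt' \le |k|\,\big(\sup_{t'\in[0,\delta]}|\widehat g(t',k)|\big)\int_0^t e^{-\nu k^2\tau}\,d\tau$, and then apply Hölder to the kernel: for $\alpha\in(\tfrac12,1)$ one has $\int_0^t e^{-\nu k^2\tau}\,d\tau \le \big(\int_0^\delta 1\,d\tau\big)^{1-\alpha}\big(\int_0^\infty e^{-\alpha^{-1}\nu k^2\tau}\,d\tau\big)^{\alpha}$ — more cleanly, $\int_0^t e^{-\nu k^2\tau}d\tau = \int_0^t (e^{-\nu k^2\tau})^{1-\alpha}(e^{-\nu k^2\tau})^{\alpha}d\tau \le \delta^{1-\alpha}\big(\int_0^\infty e^{-\nu k^2\tau/\alpha'}d\tau\big)^{\alpha}$ by Hölder with exponents $\tfrac1{1-\alpha},\tfrac1\alpha$, giving a factor $\delta^{1-\alpha}\,(C/\nu)^\alpha\,|k|^{-2\alpha}$. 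Combining, the $k$-th term is controlled by $\delta^{1-\alpha}\langle k\rangle^{s}|k|^{1-2\alpha}\sup_{t'}|\widehat g(t',k)|$, and since $\alpha>\tfrac12$ the exponent $1-2\alpha<0$ makes $|k|^{1-2\alpha}\le 1$ for $k\neq 0$; taking $\ell^2_k$ yields $\lesssim \delta^{1-\alpha}\norm{g}_{L^\infty_\delta H^s_y}$.

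It then remains to prove the bilinear estimate $\norm{g}_{H^s_y}\lesssim \norm{\gamma}_{H^s_{xy}}^2$, i.e. that the zonal average $\overline{[\partial_x\nabla^{-2}\gamma\,\partial_y\nabla^{-2}\gamma]}$ is controlled in $H^s_y$ by the square of $\gamma$ in $H^s_{xy}$. Here I would use that $\partial_x\nabla^{-2}$ and $\partial_y\nabla^{-2}$ are (essentially) Riesz-type operators of order $-1$, so $v':=\partial_x\nabla^{-2}\gamma$ and $-u':=\partial_y\nabla^{-2}\gamma$ gain a derivative: their $H^{s+1}_{xy}$ norms are $\lesssim \norm{\gamma}_{H^s_{xy}}$ (note $\widehat\gamma(0)=0$ so $\nabla^{-2}$ is well-defined and the symbols $k_1/|k|^2$, $k_2/|k|^2$ are bounded). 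Since $H^{s+1}(\mathbb T^2)$ is a Banach algebra for $s+1>1$, and for the low-regularity range $0\le s$ one can instead use the product estimate $\norm{fg}_{H^s_{xy}}\lesssim \norm{f}_{H^{s+1}_{xy}}\norm{g}_{H^{s+1}_{xy}}$ (valid since $s+1>1$), we get $\norm{u'v'}_{H^s_{xy}}\lesssim \norm{u'}_{H^{s+1}_{xy}}\norm{v'}_{H^{s+1}_{xy}}\lesssim \norm{\gamma}_{H^s_{xy}}^2$. Finally the zonal average is the projection onto $x$-independent modes, which does not increase the $H^s$ norm: $\norm{\overline{h}}_{H^s_y}\le \norm{h}_{H^s_{xy}}$ (on the Fourier side $\widehat{\overline h}(k_2)=\widehat h(0,k_2)$). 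Chaining these gives $\norm{g}_{H^s_y}\lesssim\norm{\gamma}_{H^s_{xy}}^2$, which closes the argument.

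The main obstacle I anticipate is the interplay between the three things that all have to work simultaneously at low regularity $s\ge 0$: (i) the heat-kernel Hölder split must produce exactly the gain $|k|^{-2\alpha}$ that, combined with the single derivative $\partial_y$ (worth $|k|^{+1}$) and the requirement $\alpha>\tfrac12$, leaves a harmless negative power of $|k|$ — this is why the hypothesis $\alpha\in(\tfrac12,1)$ is sharp and why one only gets $\delta^{1-\alpha}$ rather than a full power of $\delta$; and (ii) the product estimate for $u'v'$ in $H^s_{xy}$ with $s$ possibly $0$ — one genuinely needs the extra derivative gained from $\nabla^{-2}$ to land in an algebra, so the argument must be organized as "gain a derivative via $\nabla^{-2}$, then use that $H^{s+1}$ with $s+1>1$ controls products," and care is needed that the constants are uniform in $s$. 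A minor technical point is justifying the interchange of summation/integration and the use of $\widehat{\gamma}(0)=\widehat{G(\gamma)}(0)=0$ (established in Section 2) so that all negative powers of $|k|$ are taken over $k\neq 0$ only.
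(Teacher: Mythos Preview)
Your argument is correct and takes a genuinely different route from the paper. The paper works entirely on the Fourier side: it writes $\widehat{G(\gamma)}(k_2)$ as an explicit convolution sum, multiplies and divides by $((t-t')\nu k_2^2)^\alpha$ to exploit $e^{-x}x^\alpha\le C$, and then proves the resulting bilinear multiplier bound by duality with a test sequence $g\in\ell^2$ and a case-by-case analysis of the frequency interactions $|h_2|\gg|m_2|$, $|h_2|\ll|m_2|$, $|h_2|\sim|m_2|$. You instead factor $G(\gamma)=\partial_y g$, let the heat semigroup absorb the single derivative (your H\"older split on the kernel is equivalent to the paper's $e^{-x}x^\alpha$ trick, both producing the gain $|k|^{-2\alpha}$ and the loss $(t-t')^{-\alpha}$), and then reduce everything to a standard Sobolev product estimate $\norm{u'v'}_{H^s_{xy}}\lesssim\norm{u'}_{H^{s+1}_{xy}}\norm{v'}_{H^{s+1}_{xy}}$ after observing that $\partial_x\nabla^{-2},\partial_y\nabla^{-2}$ gain a derivative. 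Your approach is more modular and invokes off-the-shelf tools; the paper's is self-contained and makes the frequency cancellations explicit (indeed their case analysis actually yields summability already for $\alpha>\tfrac14$, though the statement only records $\alpha>\tfrac12$).

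Two small points to tighten. First, after bounding the $k$-th mode by $\delta^{1-\alpha}\langle k\rangle^s|k|^{1-2\alpha}\sup_{t'}|\widehat g(t',k)|$ and taking $\ell^2_k$, you do not land on $\norm{g}_{L^\infty_\delta H^s_y}$ because the supremum is inside the sum; the clean fix is to apply Minkowski in $t'$ first, bound $\norm{e^{(t-t')\nu\partial_{yy}}\partial_y g(t')}_{H^s_y}\lesssim (t-t')^{-\alpha}\norm{g(t')}_{H^s_y}$, and then integrate. Second, your justification ``valid since $s+1>1$'' is slightly off at the endpoint $s=0$: in two dimensions $H^1$ is not an algebra, but $\norm{fg}_{L^2}\le\norm{f}_{L^4}\norm{g}_{L^4}\lesssim\norm{f}_{H^1}\norm{g}_{H^1}$ via the Sobolev embedding $H^1(\mathbb T^2)\hookrightarrow L^4$, so the product estimate still holds. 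Neither point affects the overall strategy.
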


\begin{proof}
We recall that,
\begin{align*}
      \norm{\int_0^t e^{(t-t')\nu\partial_{yy}}G(\gamma) dt'}_{L^\infty_\delta H_{y}^s} \\ =\sup_{t\in[0,\delta]}&\left(\sum_{k_2} \left|\int_0^t e^{-(t-t')\nu k^2_2}\widehat{G(\gamma)}(k_2)dt'\right|^2\langle k_2\rangle^{2s}\right)^{\frac{1}{2}}
\end{align*}
where,
\begin{align}
    \widehat{G(\gamma)}(k_2)&=\mathfrak{F}_k\left(\partial_y\overline{[\partial_x\nabla^{-2}\gamma\partial_y\nabla^{-2}\gamma]}\right)\notag\\
    &=ik_2\left.\left(-i\frac{k_1}{|k|^2}\widehat{\gamma}*(-i\frac{k_2}{|k|^2}\widehat{\gamma})\right)\right|_{[0,k_2]}\notag\\
    &=\sum_{\substack{k_1=h_1+m_1=0 \\k_2=h_2+m_2\\
    m\neq0\\
    h\neq0}}i\frac{h_1m_2k_2}{|h|^2|m|^2}\widehat{\gamma}(h)\widehat{\gamma}(m).\label{nonzerocondition}
\end{align}
In fact, for a general function $f(x,y)\in H^s_{xy}(\mathbb{T}^2_l)$,
\begin{equation*}
    \overline{f(x,y)}:=\frac{1}{l}\int_0^l f(x,y)\ dx=\overline{f}(y)
\end{equation*}
and
\begin{align*}
    \widehat{\overline{f(x,y)}}(k_1,k_2)&=\int e^{iyk_2}\overline{f}(y)\ dy\\
    &=\int e^{iyk_2}\left(\frac{1}{l}\int_0^l f(x,y)\ dx\right)dy\\
    &=\frac{1}{l}\int_0^l\left(\int e^{iyk_2}f(x,y)\ dy\right)dx\\
    &=\frac{1}{l}\int_0^l\widehat{f}(0,k_2)\ dx\\
    &=\widehat{f}(0,k_2)\frac{1}{l}\int_0^l dx=\widehat{f}(0,k_2).
\end{align*}

\begin{rem}\label{remarknozero}
Due to \eqref{condfour2} and \eqref{condizionefour} we can assume $m\neq0$ and $h\neq0$ in \eqref{nonzerocondition}. Henceforth we omit this subscript.
\end{rem}
For now, we only consider
\begin{align*}
      \norm{\int_0^t e^{(t-t')\nu\partial_{yy}}G(\gamma) dt'}^2_{H_{y}^s}&\leq \int_0^t \norm{e^{(t-t')\nu\partial_{yy}}G(\gamma)}^2_{H_{y}^s} dt'\\
      &=\int_0^t \sum_{k_2}\left| e^{-(t-t')\nu k^2_2}\widehat{G(\gamma)}(k_2)\right|^2\langle k_2\rangle^{2s}dt'.
\end{align*}
Using \eqref{nonzerocondition},
\begin{align*}
      &=\int_0^t \sum_{k_2}\Bigg| e^{-(t-t')\nu k^2_2}\sum_{\substack{k_1=h_1+m_1=0 \\k_2=h_2+m_2}}i\frac{h_1m_2k_2}{|h|^2|m|^2}\widehat{\gamma}(h)\widehat{\gamma}(m)\Bigg|^2\langle k_2\rangle^{2s}dt'\\
      &\leq \int_0^t \sum_{k_2}\Bigg( e^{-(t-t')\nu k^2_2}\sum_{\substack{k_1=h_1+m_1=0 \\k_2=h_2+m_2}}\frac{|h||m||k_2|\langle k_2\rangle^{s}}{|h|^2|m|^2}\widehat{\gamma}(h)\widehat{\gamma}(m)\Bigg)^2dt'\\
      &= \int_0^t \sum_{k_2}\Bigg( \frac{e^{-(t-t')\nu k^2_2}((t-t')\nu k_2^2)^\alpha}{((t-t')\nu k_2^2)^\alpha}\sum_{\substack{k_1=h_1+m_1=0 \\k_2=h_2+m_2}}\frac{|k_2|\langle k_2\rangle^{s}}{|h||m|}\widehat{\gamma}(h)\widehat{\gamma}(m)\Bigg)^2dt'.
\end{align*}
Since
\begin{equation}\label{uniformbound1}
    e^{-(t-t')\nu k^2_2}((t-t')\nu k^2_2)^\alpha\leq C, \ \ \alpha\in [0,1],
\end{equation}
is uniformly bounded, we can focus on
\begin{equation*}
    \sum_{k_2}\Bigg(\frac{1}{(\nu k_2^2)^\alpha}\sum_{\substack{k_1=h_1+m_1=0 \\k_2=h_2+m_2}}\frac{|k_2|\langle k_2\rangle^s}{|h||m|}|\widehat{\gamma}(h)||\widehat{\gamma}(m)| \Bigg)^2,
\end{equation*}
which is equivalent to 
\begin{equation*}
    \sum_{k_2}\Bigg(\frac{1}{(\nu k_2^2)^\alpha}\sum_{\substack{h_1 \\k_2=h_2+m_2}}\frac{|k_2|\langle k_2\rangle^s}{|h||\tilde{m}|}|\widehat{\gamma}(h)||\widehat{\gamma}(\tilde{m})| \Bigg)^2
\end{equation*}
with
\begin{equation*}
    \tilde{m}=(-h_1,m_2).
\end{equation*}
By duality, we want to show
\begin{equation*}
    \sup_{\norm{g}_{l^2}\leq 1}\sum_{k_2}\sum_{\substack{h_1 \\k_2=h_2+m_2}}\frac{V(k_2,h,\tilde{m})}{\langle h\rangle^s\langle \tilde{m}\rangle^s}f_1(h)f_2(\tilde{m})g(k_2)\lesssim \norm{f_1}_{l^2}\norm{f_2}_{l^2}\norm{g}_{l^2},
\end{equation*}
where
\begin{align*}
    V(k_2,h,\tilde{m})&:=\frac{\langle k_2\rangle^s}{|k_2|^{2\alpha-1}|h||\tilde{m}|};\\
    f_1(h)&:=\langle h\rangle^s|\widehat{\gamma}(h)|;\\
    f_2(\tilde{m})&:=\langle \tilde{m}\rangle^s|\widehat{\gamma}(\tilde{m})|.
\end{align*}
We remark that if $\gamma\in H^s$ then $f_i\in l^2$ for $i=1,2$. Moreover, we have 
\begin{equation*}
    \langle k\rangle^s=\left((1+|k|^2)^{\frac{1}{2}}\right)^{s}\sim|k|^s.
\end{equation*}
Hence, 
\begin{equation*}
    \frac{V(k_2,h,\tilde{m})}{\langle h\rangle^s\langle \tilde{m}\rangle^s}\sim \frac{|k_2|^{s-2\alpha+1}}{|h|^{s+1}|\tilde{m}|^{s+1}}.
\end{equation*}\newpage
We are ready to study all possible cases as $h,m$ and $k$ varies:
\begin{itemize}
    \item [1)] $|h_2|\gg|m_2|\Rightarrow|h_2|\sim|k_2|$.
    \begin{align*}
        \frac{|k_2|^{s-2\alpha+1}}{|h|^{s+1}|\tilde{m}|^{s+1}}\sim\frac{|h_2|^{s-2\alpha+1}}{(|h_1|^2+|h_2|^2)^{\frac{s+1}{2}}(|h_1|^2+|m_2|^2)^{\frac{s+1}{2}}}.
    \end{align*}    
Since $|\tilde{m}|^{s+1}=(|h_1|^2+|m_2|^2)^{\frac{s+1}{2}}\geq 1$ and $|h_2|\neq0$ because $|h_2|\gg|m_2|$, 
    \begin{align*}
        &\leq \frac{|h_2|^{s-2\alpha+1}}{|h_2|^{s+1}}\leq \frac{1}{|h_2|^{2\alpha}}.
    \end{align*}
    Using three times Cauchy-Schwartz inequality we have 
      \begin{align*}
        &\sum_{k_2}\sum_{\substack{h_1 \\k_2=h_2+m_2}}\frac{V(k_2,h,\tilde{m})}{\langle h\rangle^s\langle \tilde{m}\rangle^s}f_1(h)f_2(\tilde{m})g(k_2)\\
        &\leq \sum_{k_2=h_2+m_2}\frac{1}{|h_2|^{2\alpha}}g(k_2)\sum_{h_1}f_1(h_1,h_2)f_2(-h_1,m_2)\\
        &\leq \sum_{k_2=h_2+m_2}\frac{g(k_2)}{|h_2|^{2\alpha}}\left(\sum_{h_1}|f_1(h_1,h_2)|^2\right)^{\frac{1}{2}}\left(\sum_{h_1}|f_2(-h_1,m_2)|^2\right)^{\frac{1}{2}}\\
        &= \sum_{h_2}\sum_{k_2}\frac{g(k_2)}{|h_2|^{2\alpha}}\left(\sum_{h_1}|f_1(h_1,h_2)|^2\right)^{\frac{1}{2}}\left(\sum_{h_1}|f_2(-h_1,k_2-h_2)|^2\right)^{\frac{1}{2}}\\
        &\leq \norm{g}_{l_2}\norm{f_2}_{l_2}\sum_{h_2}\frac{1}{|h_2|^{2\alpha}}\left(\sum_{h_1}|f_1(h_1,h_2)|^2\right)^{\frac{1}{2}}\\
        &\leq \norm{g}_{l_2}\norm{f_1}_{l_2}\norm{f_2}_{l_2}\left(\sum_{h_2}\frac{1}{|h_2|^{4\alpha}}\right)^{\frac{1}{2}}.
    \end{align*}
    %\begin{align*}
     %   &\sum_{k_2}\sum_{\substack{h_1 \\k_2=h_2+m_2}}\frac{V(k_2,h,\tilde{m})}{\langle h\rangle^s\langle \tilde{m}\rangle^s}f_1(h)f_2(\tilde{m})g(k_2)\\
     %   &\leq \sum_{k_2=h_2+m_2}\frac{1}{|m_2|^{2\alpha+1+s}}g(k_2)\sum_{h_1}f_1(h_1,h_2)f_2(-h_1,m_2)\\
      %  &\leq \sum_{k_2=h_2+m_2}\frac{g(k_2)}{|m_2|^{2\alpha+1+s}}\left(\sum_{h_1}|f_1(h_1,h_2)|^2\right)^{\frac{1}{2}}\left(\sum_{h_1}|f_2(-h_1,m_2)|^2\right)^{\frac{1}{2}}\\
       % &= \sum_{m_2}\sum_{k_2}\frac{g(k_2)}{|m_2|^{2\alpha+1+s}}\left(\sum_{h_1}|f_1(h_1,k_2-m_2)|^2\right)^{\frac{1}{2}}\left(\sum_{h_1}|f_2(-h_1,m_2)|^2\right)^{\frac{1}{2}}\\
        %&\leq \norm{g}_{l_2}\norm{f_1}_{l_2}\sum_{m_2}\frac{1}{|m_2|^{2\alpha+1+s}}\left(\sum_{h_1}|f_2(-h_1,m_2)|^2\right)^{\frac{1}{2}}\\
        %&\leq \norm{g}_{l_2}\norm{f_1}_{l_2}\norm{f_2}_{l_2}\left(\sum_{m_2}\frac{1}{|m_2|^{2(2\alpha+1+s)}}\right)^{\frac{1}{2}}.
    %\end{align*}
    The last term is summable if $4\alpha>1$, i.e.
    \begin{equation}
        \alpha>\frac{1}{4}.
    \end{equation} 
    \item [2)] $|h_2|\ll|m_2|\Rightarrow|m_2|\sim|k_2|$.
    \begin{align*}
        \frac{|k_2|^{s-2\alpha+1}}{|h|^{s+1}|\tilde{m}|^{s+1}}&\sim\frac{|m_2|^{s-2\alpha+1}}{(|h_1|^2+|h_2|^2)^{\frac{s+1}{2}}(|h_1|^2+|m_2|^2)^{\frac{s+1}{2}}}\\
        &\leq\frac{|m_2|^{s-2\alpha+1}}{|m_2|^{s+1}}\leq \frac{1}{|m_2|^{2\alpha}}
    \end{align*}
    because $|h|^{s+1}=(|h_1|^2+|h_2|^2)^{\frac{s+1}{2}}\geq 1$ and $|m_2|\neq0$ since $|m_2|\gg|h_2|$. Then,
     \begin{align*}
        &\sum_{k_2}\sum_{\substack{h_1 \\k_2=h_2+m_2}}\frac{V(k_2,h,\tilde{m})}{\langle h\rangle^s\langle \tilde{m}\rangle^s}f_1(h)f_2(\tilde{m})g(k_2)\\
        &\leq \sum_{k_2=h_2+m_2}\frac{1}{|m_2|^{2\alpha}}g(k_2)\sum_{h_1}f_1(h_1,h_2)f_2(-h_1,m_2)\\
       &\leq \sum_{k_2=h_2+m_2}\frac{g(k_2)}{|m_2|^{2\alpha}}\left(\sum_{h_1}|f_1(h_1,h_2)|^2\right)^{\frac{1}{2}}\left(\sum_{h_1}|f_2(-h_1,m_2)|^2\right)^{\frac{1}{2}}\\
        &= \sum_{m_2}\sum_{k_2}\frac{g(k_2)}{|m_2|^{2\alpha}}\left(\sum_{h_1}|f_1(h_1,k_2-m_2)|^2\right)^{\frac{1}{2}}\left(\sum_{h_1}|f_2(-h_1,m_2)|^2\right)^{\frac{1}{2}}\\
        &\leq \norm{g}_{l_2}\norm{f_1}_{l_2}\sum_{m_2}\frac{1}{|m_2|^{2\alpha}}\left(\sum_{h_1}|f_2(-h_1,m_2)|^2\right)^{\frac{1}{2}}\\
        &\leq \norm{g}_{l_2}\norm{f_1}_{l_2}\norm{f_2}_{l_2}\left(\sum_{m_2}\frac{1}{|m_2|^{4\alpha}}\right)^{\frac{1}{2}}.
    \end{align*}
    As above, the last term is summable if
    \begin{equation*}
        \alpha>\frac{1}{4}.
    \end{equation*}
    
    \item[3)] $|h_2|\sim|m_2|\Rightarrow|k_2|\sim|h_2|\sim|m_2|$.
    \begin{align*}
        \frac{|k_2|^{s-2\alpha+1}}{|h|^{s+1}|\tilde{m}|^{s+1}}&\sim\frac{|h_2|^{s-2\alpha+1}}{(|h_1|^2+|h_2|^2)^{\frac{s+1}{2}}(|h_1|^2+|m_2|^2)^{\frac{s+1}{2}}}
    \end{align*}
    Since $|k_2|\neq0$ at least one between $|h_2|$ and $|m_2|$ is not zero. Moreover $|k_2|\sim|h_2|\sim|m_2|$, then 
    \begin{align*}
        \frac{|k_2|^{s-2\alpha+1}}{|h|^{s+1}|\tilde{m}|^{s+1}}\lesssim \frac{1}{|h_2|^{2\alpha}}
    \end{align*}
    and we conclude as in the previous case.
    \end{itemize}
By $1),2),3)$ we have that 
\begin{align*}
    s&\geq0;\\
    \alpha&>\frac{1}{4}.
\end{align*}
Then, by \eqref{uniformbound1} and definitions of $f_1$ and $f_2$,
\begin{align*}
    \norm{\int_0^t e^{(t-t')\nu\partial_{yy}}G(\gamma) dt'}_{H_{y}^s} &\lesssim \norm{\gamma}^2_{H^s_{xy}}\int_0^t\frac{1}{(t-t')^\alpha}dt'\\
    &\sim  t^{1-\alpha}\norm{\gamma}^2_{H^s_{xy}},
\end{align*}
for $\frac{1}{4}<\alpha<1$ and $s\geq0$. Finally 
\begin{align*}
    \norm{\int_0^t e^{(t-t')\nu\partial_{yy}}G(\gamma) dt'}_{L^\infty_\delta H_{y}^s} \lesssim  \delta^{1-\alpha}\norm{\gamma}^2_{L^\infty_\delta H^s_{xy}}
\end{align*}
for $\frac{1}{4}<\alpha<1$ and $s\geq0$.
\end{proof}

We now define the following functional,
\begin{equation}\label{phi1}
        \Phi_1(\mu,\gamma)=e^{\nu t\partial_{yy}}\mu_0+\int_0^t e^{\nu(t-t')\partial_{yy}}G(\gamma) dt',
\end{equation}
and as a consequence of the previous proposition we have, 
\begin{cor}\label{corollariou}
Let $t\in [0,\delta]$, $\delta\in \mathbb{R}_{+}$, $s\geq 0$, $\alpha\in(\frac{1}{2},1)$. Then,
\begin{equation*}
    \norm{\Phi_1(\mu,\gamma)}_{L^\infty_\delta H^s_{y}}\lesssim  \norm{\mu_0}_{H^s_{y}}+\delta^{1-\alpha}\norm{\gamma}^2_{L^\infty_\delta H^s_{xy}}.
\end{equation*}
\end{cor}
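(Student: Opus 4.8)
The plan is to split $\Phi_1$ into its homogeneous (free-evolution) part and its inhomogeneous (Duhamel) part, bound each separately in $L^\infty_\delta H^s_y$, and add. Write
\[
\Phi_1(\mu,\gamma) = e^{\nu t\partial_{yy}}\mu_0 + \int_0^t e^{\nu(t-t')\partial_{yy}}G(\gamma)\, dt'.
\]
The second term is exactly the object estimated in Proposition \ref{prop1u}, which gives, uniformly for $t\in[0,\delta]$,
\[
\norm{\int_0^t e^{\nu(t-t')\partial_{yy}}G(\gamma)\, dt'}_{L^\infty_\delta H^s_y} \lesssim \delta^{1-\alpha}\norm{\gamma}^2_{L^\infty_\delta H^s_{xy}},
\]
for $\alpha\in(\tfrac12,1)$. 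So the only thing left to do is to estimate the homogeneous part $e^{\nu t\partial_{yy}}\mu_0$.

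For the homogeneous part I would pass to Fourier modes in $y$: by Plancherel,
\[
\norm{e^{\nu t\partial_{yy}}\mu_0}_{H^s_y}^2 = \sum_{k} \langle k\rangle^{2s}\, e^{-2\nu t k^2}\,\big|\widehat{\mu_0}(k)\big|^2 \le \sum_k \langle k\rangle^{2s}\big|\widehat{\mu_0}(k)\big|^2 = \norm{\mu_0}_{H^s_y}^2 ,
\]
since $t\ge 0$ and $\nu>0$ force $e^{-2\nu t k^2}\le 1$ for every mode $k$. This bound is uniform in $t$, so $\norm{e^{\nu t\partial_{yy}}\mu_0}_{L^\infty_\delta H^s_y}\le \norm{\mu_0}_{H^s_y}$. (The same computation records that the one-dimensional heat semigroup is a contraction on $H^s_y$, a fact reused throughout the rest of the section.)

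Combining the two bounds by the triangle inequality and taking the supremum over $t\in[0,\delta]$ then gives
\[
\norm{\Phi_1(\mu,\gamma)}_{L^\infty_\delta H^s_y} \le \norm{e^{\nu t\partial_{yy}}\mu_0}_{L^\infty_\delta H^s_y} + \norm{\int_0^t e^{\nu(t-t')\partial_{yy}}G(\gamma)\, dt'}_{L^\infty_\delta H^s_y} \lesssim \norm{\mu_0}_{H^s_y} + \delta^{1-\alpha}\norm{\gamma}^2_{L^\infty_\delta H^s_{xy}},
\]
which is the claim. There is essentially no real obstacle: the corollary is a mechanical combination of Proposition \ref{prop1u} with the elementary $L^2$-contractivity of the heat semigroup on $H^s_y$. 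The only point worth a moment's care is that both ingredient estimates hold uniformly in $t\in[0,\delta]$, so that they persist after passing to the $L^\infty_\delta$ norm — and they do, since the semigroup multiplier is bounded by $1$ for all $t\ge 0$ and the Proposition's bound is already stated in $L^\infty_\delta$.
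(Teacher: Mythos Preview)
Your proof is correct and essentially identical to the paper's own argument: split $\Phi_1$ by the triangle inequality into the homogeneous part and the Duhamel term, invoke Proposition~\ref{prop1u} for the latter, and bound the former via Plancherel and the pointwise estimate $e^{-\nu t k_2^2}\le 1$. The only cosmetic difference is that the paper writes the Fourier variable as $k_2$ rather than $k$.
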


\begin{proof}
\begin{equation*}
    \norm{\Phi_1(\mu,\gamma)}_{L^\infty_\delta H^s_{y}}\leq \underbrace{\norm{e^{\nu t\partial_{yy}}\mu_0}_{L^\infty_\delta H^s_{y}}}_{A}+\underbrace{ \norm{\int_0^t e^{(t-t')\nu\partial_{yy}}G(\gamma) dt'}^2_{L^\infty_t H_{y}^s}}_{B}.
\end{equation*}
By Proposition \ref{prop1u} we have
\begin{equation*}
    B\lesssim \delta^{1-\alpha}\norm{\gamma}^2_{L^\infty_\delta H^s_{xy}}.
\end{equation*}
On the other hand,
\begin{align*}
    A=\norm{e^{\nu t\partial_{yy}}\mu_0}_{L^\infty_\delta H^s_{y}}&=\sup_{t\in[0,\delta]}\left(\sum_{k_2}\left|e^{-t\nu k^2_2}\widehat{\mu}_0(k_2)\right|^2\langle k_2\rangle^{2s}\right)^{\frac{1}{2}}\\
    &\lesssim\left(\sum_{k_2}\left|\widehat{\mu}_0(k_2)\right|^2\langle k_2\rangle^{2s}\right)^{\frac{1}{2}}=\norm{\mu_0}_{H^s_{y}}.
\end{align*}
\end{proof}

\subsubsection{Eddy-Vorticity Equation Estimate}
Similarly, we study the equation for $\gamma$.

\begin{prop}\label{bound}
Let $t\in [0,\delta]$, $\delta\in \mathbb{R}_{+}$, $s\geq0$, $\alpha\in(\frac{3}{4},1)$. Then,
\begin{align*}
    \norm{\int_0^t e^{\tilde{D}^2(t-t')}F(\mu,\gamma)\ dt'}_{L^\infty_\delta H_{xy}^s}&\\
    \lesssim\delta^{1-\alpha}\left(\norm{\gamma}^2_{L^\infty_\delta H^s_{xy}}\right.&\left.+\norm{
    \mu}_{L^\infty_\delta H^s_{y}}\norm{\gamma}_{L^\infty_\delta H^s_{xy}}+\norm{\gamma}_{L^\infty_\delta H^s_{xy}}\right).
\end{align*}
\end{prop}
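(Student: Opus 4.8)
The plan is to follow the scheme of Proposition \ref{prop1u}, keeping track of two new features: the nonlinearity $F$ now consists of five summands of different homogeneity, and the evolution is governed by $\tilde{D}^2=\nu\nabla^2-\partial_x\nabla^{-2}$ rather than the pure heat operator. For the latter the key remark is that at a frequency $k=(k_1,k_2)\neq0$ the symbol of $\tilde{D}^2$ is $-\nu|k|^2+ik_1/|k|^2$, so that $|\widehat{e^{\tilde{D}^2\tau}f}(k)|=e^{-\nu|k|^2\tau}|\widehat f(k)|$: the term $-\partial_x\nabla^{-2}$ contributes only a unimodular phase, which is invisible to the $H^s$-norm (this is also why we may normalise $C_1=1$). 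Hence, exactly as in Proposition \ref{prop1u}, Minkowski's inequality, the insertion of $1=(\nu|k|^2(t-t'))^{\alpha}/(\nu|k|^2(t-t'))^{\alpha}$ and the uniform bound $e^{-x}x^\alpha\leq C$ of \eqref{uniformbound1} reduce the claim to the time-independent multiplier estimate
\begin{equation*}
\Big\|\,|k|^{-2\alpha}\langle k\rangle^s\,\widehat{F(\mu,\gamma)}(k)\,\Big\|_{\ell^2_k}\ \lesssim\ \norm{\gamma}^2_{H^s_{xy}}+\norm{\mu}_{H^s_y}\norm{\gamma}_{H^s_{xy}}+\norm{\gamma}_{H^s_{xy}},
\end{equation*}
the remaining time integral $\int_0^t(t-t')^{-\alpha}\,dt'\sim t^{1-\alpha}$ producing, after the supremum over $t\in[0,\delta]$, the factor $\delta^{1-\alpha}$ (and forcing $\alpha<1$).

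The next step is to split $F=F_1+\dots+F_5$ along the five terms in \eqref{notatmugamma} and bound each contribution to the left-hand side separately, recalling from Remark \ref{remarknozero} that all frequencies in sight are nonzero, so that $\nabla^{-2}$ and the divisions by $|h|,|m|$ make sense. The linear term $F_4=c_0\gamma_x$ is immediate: its weighted symbol is $|c_0|\,|k|^{1-2\alpha}\langle k\rangle^s|\widehat\gamma(k)|$, and since $|k|\gtrsim1$ on the torus we have $|k|^{1-2\alpha}\lesssim1$ as soon as $\alpha\geq\frac12$, so this term is controlled by $\norm{\gamma}_{H^s_{xy}}$ and accounts for the degree-one term on the right. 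For the three quadratic eddy terms $F_1=(\partial_y\nabla^{-2}\gamma)\gamma_x$, $F_2=-(\partial_x\nabla^{-2}\gamma)\gamma_y$, $F_5=\partial_y\overline{[(\partial_x\nabla^{-2}\gamma)\gamma]}$, and for the bilinear term $F_3=-\mu\gamma_x$ (whose transform is $-ik_1\sum_{h_2}\widehat{\mu}(h_2)\widehat\gamma(k_1,k_2-h_2)$), I would proceed exactly as in Proposition \ref{prop1u}: write $\widehat{F_i}(k)$ as a convolution sum over $h+m=k$ (with the additional constraint $h_1+m_1=0$ for the zonally averaged term $F_5$), set $f_1,f_2$ equal to $\langle\,\cdot\,\rangle^s$ times the moduli of the relevant Fourier coefficients (so that $f_i\in\ell^2$ with the right norms), pass to the dual formulation against $g$ with $\norm{g}_{\ell^2}\leq1$, use $\langle k\rangle^s\sim|k|^s$ together with the subadditivity $|k|^s\lesssim|h|^s+|m|^s$ valid for $s\geq0$ to distribute the weight, and close by three successive Cauchy--Schwarz inequalities, one per summation variable. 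Collecting the five bounds and integrating in time gives the inequality, with $F_1,F_2,F_5$ producing the $\norm{\gamma}^2$ contribution and $F_3$ the $\norm{\mu}\norm{\gamma}$ one.

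The point where the hypothesis $\alpha\in(\frac34,1)$ enters — rather than $\alpha\in(\frac14,1)$ as in Proposition \ref{prop1u} — is precisely in the four terms $F_1,F_2,F_3,F_5$, each of which carries a factor bearing a \emph{genuine} derivative ($\gamma_x$ or $\gamma_y$) uncompensated by any $\nabla^{-2}$. In the multiplier analysis the parabolic gain $|k|^{-2\alpha}$ must now simultaneously absorb that derivative — which already forces $\alpha>\frac12$ in order that $|k|^{1-2\alpha}\lesssim1$ — and still retain enough decay to be summed (via Cauchy--Schwarz) in the remaining one-dimensional convolution variable, which shifts the "$4\alpha>1$'' condition of Proposition \ref{prop1u} up to $\alpha>\frac34$. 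Concretely, for $F_3=-\mu\gamma_x$ one is led to estimate, for each fixed $x$-frequency $k_1$, the $\ell^2$-norm in the $y$-variable of the kernel $|k_1|\,(|k_1|^{2\alpha}+|k_2|^{2\alpha})^{-1}$; this is comparable to $|k_1|^{3/2-2\alpha}$ and is bounded uniformly in $k_1$ exactly when $\alpha\geq\frac34$, after which the convolution closes with both factors in $\ell^2$. The terms $F_1,F_2,F_5$ lead to analogous thresholds. I expect these duality/multiplier estimates — in particular checking which frequency configuration is extremal and that the borderline regularity $s=0$ remains admissible — to be the main technical obstacle; the rest is a direct adaptation of the argument already carried out for Proposition \ref{prop1u}.
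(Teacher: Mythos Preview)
Your plan is essentially the paper's own: reduce via Minkowski and the parabolic smoothing trick $e^{-x}x^\alpha\leq C$ to a weighted multiplier bound, split $F$ into its five pieces, handle $c_0\gamma_x$ trivially, and treat the bilinear terms by duality plus Cauchy--Schwarz exactly as in Proposition \ref{prop1u}. Your observation that $-\partial_x\nabla^{-2}$ contributes only a unimodular Fourier phase is the right reason the argument carries over unchanged from the heat semigroup.

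One point deserves correction. You attribute the threshold $\alpha>\tfrac34$ to all four terms $F_1,F_2,F_3,F_5$, on the grounds that each carries an uncompensated derivative. In the paper's analysis this is not so: the eddy--eddy terms $F_1,F_2$ and the averaged term $F_5$ close already under $\alpha>\tfrac12$, because the factor $\partial_j\nabla^{-2}\gamma$ supplies a full $|h|^{-1}$ of extra decay in the convolution multiplier. The constraint $\alpha>\tfrac34$ comes \emph{only} from $F_3=-\mu\gamma_x$, and the mechanism is that $\mu$ is one-dimensional: the convolution runs over the single variable $m_2$, so the Cauchy--Schwarz closing sum is one-dimensional and needs $4\alpha-2>1$ rather than $4\alpha-2>2$. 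Your kernel computation for $F_3$ --- that $\big\||k_1|\,|k|^{-2\alpha}\big\|_{\ell^2_{k_2}}\sim|k_1|^{3/2-2\alpha}$ --- is a perfectly valid (and in fact marginally sharper) alternative to the paper's case-by-case analysis, and it correctly isolates this term as the bottleneck; but the heuristic ``uncompensated derivative'' does not distinguish $F_3$ from $F_1,F_2,F_5$, and it is the 1D nature of $\mu$ that does.
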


\begin{proof}
By definition,
\begin{align*}
    \norm{\int_0^t e^{\tilde{D}^2(t-t')}F(\mu,\gamma)\ dt'}_{L^\infty_\delta H_{xy}^s}& \\
    = \sup_{t\in[0,\delta]}\left(\sum_k \left|\int_0^t\right.\right. &\left.\left.e^{(i\frac{k_1}{|k|^2}-\nu|k|^2)(t-t')}\widehat{F(\mu,\gamma)}(k)dt'\right|^2\langle k\rangle^{2s}\right)^{\frac{1}{2}}
\end{align*}
where, 
\begin{align*}
    \widehat{F(\mu,\gamma)}&(k)\\
    &=\mathfrak{F}\left((\partial_y\nabla^{-2}\zeta')\zeta'_x-(\partial_x\nabla^{-2}\zeta')\zeta'_y-\mu\gamma_x+c_0\gamma_x+\partial_y\overline{[(\partial_x\nabla^{-2}\zeta')\zeta']}\right)\\
    &=-\left(i\frac{k_2}{|k|^2}\widehat{\gamma}\right)*(ik_1\widehat{\gamma})+\left(i\frac{k_1}{|k|^2}\widehat{\gamma}\right)*\left(ik_2\widehat{\gamma}\right)-\widehat{\mu}(k_2)*ik_1\widehat{\gamma}(k)\\
    &\left.+c_0ik_1\widehat{\gamma}(k)-ik_2\left(i\frac{k_1}{|k|^2}\widehat{\gamma}*\widehat{\gamma}\right)\right|_{[0,k_2]}\\
    &=\sum_{k=h+m}\left(\frac{h_2m_1}{|h|^2}-\frac{h_1m_2}{|h|^2}\right)\widehat{\gamma}(h)\widehat{\gamma}(m)-\sum_{\substack{k_1=h_1+0 \\k_2=h_2+m_2}} ih_1\widehat{\mu}(m_2)\widehat{\gamma}(h)\\
    &+c_0ik_1\widehat{\gamma}(k)+\sum_{\substack{0=k_1=h_1+m_1 \\k_2=h_2+m_2}} \frac{k_2h_1}{|h|^2}\widehat{\gamma}(h)\widehat{\gamma}(m).
\end{align*}
For now, we only consider 
\begin{align*}
      \bigg\|\int_0^t e^{\tilde{D}^2(t-t')}&F(\mu,\gamma)\ dt'\bigg\|^2_{H_{xy}^s}\\
      \leq &\int_0^t \norm{\int_0^t e^{\tilde{D}^2(t-t')}F(\mu,\gamma)\ dt'}^2_{H_{xy}^s} dt'\\
      =&\int_0^t \sum_k\left|  e^{(i\frac{k_1}{|k|^2}-\nu|k|^2)(t-t')}\widehat{F(\mu,\gamma)}(k)\right|^2\langle k\rangle^{2s}dt'\\
      =&\int_0^t \sum_k\left|  e^{(i\frac{k_1}{|k|^2}-\nu|k|^2)(t-t')}\bigg(\sum_{k=h+m}\left(\frac{h_2m_1}{|h|^2}-\frac{h_1m_2}{|h|^2}\right)\widehat{\gamma}(h)\widehat{\gamma}(m)\right.\\
      &+\sum_{\substack{k_1=h_1 \\k_2=h_2+m_2}} ih_1\widehat{\mu}(m_2)\widehat{\gamma}(h)
      +c_0ik_1\widehat{\gamma}(k)\\
      &+\sum_{\substack{k_1=h_1+m_1=0 \\k_2=h_2+m_2}} \left.\frac{k_2h_1}{|h|^2}\widehat{\gamma}(h)\widehat{\gamma}(m)\bigg)\right|^2\langle k\rangle^{2s}dt'
\end{align*}
\begin{align*}
      \leq\int_0^t \sum_k\Bigg[  e^{-\nu|k|^2(t-t')}\langle k\rangle^{s}&\bigg(\sum_{k=h+m}\left(\frac{|h||m|+|h||m|}{|h|^2}\right)|\widehat{\gamma}(h)||\widehat{\gamma}(m)|\\
      &+\sum_{\substack{k_1=h_1 \\k_2=h_2+m_2}} |h||\widehat{\mu}(m_2)||\widehat{\gamma}(h)|
      +c_0|k||\widehat{\gamma}(k)|\\
      &+\sum_{\substack{k_1=h_1+m_1=0 \\k_2=h_2+m_2}} \frac{|k||h|}{|h|^2}|\widehat{\gamma}(h)||\widehat{\gamma}(m)|\bigg)\Bigg]^2 dt'.
\end{align*}
Multiplying and dividing by $(\nu|k|^2(t-t'))^\alpha$, 
\begin{align*}
     =\int_0^t \sum_k\Bigg[  \frac{e^{-\nu|k|^2(t-t')}(\nu|k|^2(t-t'))^\alpha}{(\nu|k|^2(t-t'))^\alpha}&\langle k\rangle^{s}\Bigg(\sum_{k=h+m}2\frac{|m|}{|h|}|\widehat{\gamma}(h)||\widehat{\gamma}(m)|\\
     +&\sum_{\substack{k_1=h_1 \\k_2=h_2+m_2}}|h||\widehat{\mu}(m_2)||\widehat{\gamma}(h)|
     +c_0|k||\widehat{\gamma}(k)|\\
     +&\sum_{\substack{k_1=h_1+m_1=0 \\k_2=h_2+m_2}}\frac{|k|}{|h|}|\widehat{\gamma}(h)||\widehat{\gamma}(m)|\Bigg)\Bigg]^2 dt'.
\end{align*}
Since
\begin{equation}\label{uniformbound2}
    e^{-\nu|k|^2(t-t')}(\nu|k|^2(t-t'))^\alpha\leq C, \ \ \ C\in \mathbb{R}
\end{equation}
is uniformly bounded, we study 
\begin{align*}
     \sum_k\left[\frac{\langle k\rangle^{s}}{|k|^{2\alpha}}\right.\bigg(\sum_{k=h+m}&2\frac{|m|}{|h|}|\widehat{\gamma}(h)||\widehat{\gamma}(m)|+\sum_{\substack{k_1=h_1+0 \\k_2=h_2+m_2}} |h||\widehat{\mu}(m_2)||\widehat{\gamma}(h)|\\
     &
     +c_0|k||\widehat{\gamma}(k)|+\sum_{\substack{h_1+m_1=0 \\k_2=h_2+m_2}} \frac{|k|}{|h|}|\widehat{\gamma}(h)||\widehat{\gamma}(m)|\bigg)\bigg]^2.
\end{align*}
By duality we want to show
\begin{align}\label{versionesup1}
    \sup_{\norm{g}_{l^2}\leq 1}\sum_k\frac{\langle k\rangle^{s}}{|k|^{2\alpha}}\Bigg(\sum_{k=h+m}&2\frac{|m|}{|h|}|\widehat{\gamma}(h)||\widehat{\gamma}(m)|+\sum_{\substack{k_1=h_1+0 \\k_2=h_2+m_2}} |h||\widehat{\mu}(m_2)||\widehat{\gamma}(h)|\notag\\
     &
     +c_0|k||\widehat{\gamma}(k)|+\sum_{\substack{h_1+m_1=0 \\k_2=h_2+m_2}} \frac{|k|}{|h|}|\widehat{\gamma}(h)||\widehat{\gamma}(m)|\Bigg)g(k)\notag\\
    \lesssim\big(\norm{f_1}^2_{l^2}&\norm{g}_{l^2}+\norm{f_1}_{l^2}\norm{g}_{l^2}+\norm{f_1}_{l^2}\norm{f_2}_{l^2}\norm{g}_{l^2}\big)
\end{align}
where
\begin{align*}
    f_1(k)&:=\langle k\rangle^s|\widehat{\gamma}(k)|,\\
    f_2(k_2)&:=\langle k_2\rangle^s|\widehat{\mu}(k_2)|.
\end{align*}
We observe that if $\gamma,\ \mu\in H^s$ then $f_i\in l^2$ for $i=1,2$.\\
Since
\begin{equation*}
    \sup(a+b)\leq\sup(a)+\sup(b),
\end{equation*}
we split \eqref{versionesup1} and study the following problems 
\begin{align}
    &\sup_{\norm{g}_{l^2}\leq 1}\sum_k\frac{\langle k\rangle^{s}}{|k|^{2\alpha}}\sum_{k=h+m}\frac{|m|}{|h|}\frac{1}{\langle h\rangle^s\langle m\rangle^s}f_1(h)f_1(m)g(k)\lesssim\norm{f_1}^2_{l^2}\norm{g}_{l^2} \label{in1}\\
    &\sup_{\norm{g}_{l^2}\leq 1}\sum_{k}\frac{\langle k\rangle^s}{|k|^{2\alpha}}\sum_{\substack{h_1+m_1=0 \\ k_2=h_2+m_2}}\frac{|k|}{|h|}\frac{1}{\langle h\rangle^s\langle m\rangle^s}f_1(h)f_1(m)g(k)\lesssim\norm{f_1}_{l^2}^2\norm{g}_{l^2}\label{in2}\\
    &\sup_{\norm{g}_{l^2}\leq 1}\sum_{k}c_0\frac{|k|}{|k|^{2\alpha}}f_1(k)g(k)\lesssim\norm{f_1}_{l^2}\norm{g}_{l^2}\label{in3}\\
    &\sup_{\norm{g}_{l^2}\leq 1}\sum_{k}\frac{\langle k\rangle^s}{|k|^{2\alpha}}\sum_{\substack{k_1=h_1 \\ k_2=h_2+m_2}}\frac{|h|}{\langle h\rangle^s\langle m_2\rangle^s}f_1(h)f_2(m_2)g(k)\lesssim\norm{f_1}_{l^2}\norm{f_2}_{l^2}\norm{g}_{l^2}.\label{in4}
\end{align}
We start with \eqref{in1} and since
\begin{equation}
    \langle k\rangle^s\sim|k|^s,
\end{equation}
we have
\begin{equation*}
    \frac{\langle k\rangle^{s}}{|k|^{2\alpha}}\frac{|m|}{|h|}\frac{1}{\langle h\rangle^s\langle m\rangle^s}\sim \frac{|k|^{s-2\alpha}}{|h|^{s+1}|m|^{s-1}}.
\end{equation*}
\begin{itemize}
    \item [1)] $|h|\gg|m| \Rightarrow |k|\sim|h|$.
    \begin{equation*}
        \frac{|k|^{s-2\alpha}}{|h|^{s+1}|m|^{s-1}}\sim\frac{1}{|h|^{2\alpha+1}|m|^{s-1}}\leq \frac{1}{|m|^{2\alpha+s}}.
    \end{equation*}
    Proceeding as in Proposition \ref{prop1u}, we get 
    \begin{align*}
        \sum_k\sum_{k=h+m} &\frac{1}{|m|^{2\alpha+s}} f_1(h)f_1(m)g(k)\\
        =&\sum_m\frac{1}{|m|^{2\alpha+s}}f_1(m)\sum_k f_1(k-m)g(k) \\
        \leq& \sum_m\frac{1}{|m|^{2\alpha+s}}f_1(m)\left(\sum_k f_1^2(k-m)\right)^{\frac{1}{2}}\left(\sum_k g^2(k)\right)^{\frac{1}{2}} \\
        \leq&\left(\sum_m\frac{1}{|m|^{4\alpha+2s}}\right)^{\frac{1}{2}}\left(\sum_m f_1^2(m)\right)^{\frac{1}{2}}\norm{f_1}_{l^2}\norm{g}_{l^2} \\
        \leq&\left( \sum_m \frac{1}{|m|^{4\alpha+2s}}\right)^{\frac{1}{2}}\norm{f_1}^2_{l^2}\norm{g}_{l^2}.
    \end{align*}
    The first term is summable if
    \begin{equation*}
        4\alpha+2s>2.
    \end{equation*}
    The worst case is when $s=0$, but in this situation it is sufficient to choose 
    \begin{equation*}
        \alpha>\frac{1}{2}.
    \end{equation*}
    \item[2)]$|h|\ll|m|\Rightarrow |k|\sim|m|$.
    \begin{equation*}
        \frac{|k|^{s-2\alpha}}{|h|^{s+1}|m|^{s-1}}\sim\frac{1}{|h|^{s+1}|m|^{2\alpha-1}}\overbrace{\leq}^{\alpha>\frac{1}{2}}\frac{1}{|h|^{2\alpha+s}}\overbrace{\leq}^{s\geq 0}\frac{1}{|h|^{2\alpha}}.
    \end{equation*}
    Then,
    \begin{align*}
         \sum_k\sum_{k=h+m}& \frac{1}{|h|^{2\alpha}}f_1(h)f_1(m)g(k)\\
        &=\sum_h\frac{1}{|h|^{2\alpha}}f_1(h)\sum_m f_1(m)g(h+m)\\
        &\leq\left( \sum_h \frac{1}{|h|^{4\alpha}}\right)^{\frac{1}{2}}\norm{f_1}^2_{l^2}\norm{g}_{l^2}
    \end{align*}
    which is summable if $\alpha>\frac{1}{2}$.
    
    \item[3)] $|h|\sim|m|\Rightarrow |k|\sim|h| \sim |m|$.
    \begin{equation*}
        \frac{|k|^{s-2\alpha}}{|h|^{s+1}|m|^{s-1}}\sim\frac{1}{|h|^{2\alpha+s}}\leq \frac{1}{|h|^{2\alpha}}.
    \end{equation*}
    we conclude as in the previous case.
\end{itemize}
By $1),2),3)$ we get
\begin{align*}
    s&\geq 0;\\
    \alpha&>\frac{1}{2}.
\end{align*}
Similarly, we study \eqref{in2}. We have  
\begin{equation*}
    \frac{|k|}{|h|\langle h\rangle^s\langle m\rangle^s}\sim \frac{|k|}{|h|^{s+1}|m|^s},
\end{equation*}
so that we write
\begin{equation}\label{equazione in k}
    \sum_{k}\frac{\langle k\rangle^s}{|k|^{2\alpha}}g(k)\sum_{\substack{k_1=0 \\ k_2=h_2+m_2}}\frac{|k|}{|h|^{s+1}|m|^s} f_1(h)f_1(m)
\end{equation}
If we define $\tilde{m}=(-h_1,m_2)$, equation \eqref{equazione in k} becomes 
\begin{equation*}
    \sum_{k}\frac{\langle k\rangle^s}{|k|^{2\alpha}}g(k)\sum_{ k=h+\tilde{m}}\frac{|k|}{|h|^{s+1}|\tilde{m}|^s} f_1(h)f_1(\tilde{m})
\end{equation*}

\begin{itemize}
    \item [1)]$|h|\gg|\tilde{m}|\Rightarrow|k|\sim|h|$.
    \begin{equation*}
        \frac{\langle k\rangle^s}{|k|^{2\alpha}}\frac{|k|}{|h|^{s+1}|\tilde{m}|^s}\sim \frac{1}{|h|^{2\alpha}|\tilde{m}|^s}\leq\frac{1}{|\tilde{m}|^{2\alpha}|\tilde{m}|^s}\leq \frac{1}{|\tilde{m}|^{2\alpha}}.
    \end{equation*}
    Then,
\begin{align*}
        \sum_{k}\sum_{ k=h+\tilde{m}}&\frac{1}{|\tilde{m}|^{2\alpha}} f_1(h)f_1(\tilde{m})g(k)\\
        =&\sum_{\tilde{m}}\frac{1}{|\tilde{m}|^{2\alpha}}f_1(\tilde{m})\sum_k f_1(k-\tilde{m})g(k) \\
        \leq& \sum_{\tilde{m}}\frac{1}{|\tilde{m}|^{2\alpha}}f_1(\tilde{m})\left(\sum_k f_1^2(k-\tilde{m})\right)^{\frac{1}{2}}\left(\sum_k g^2(k)\right)^{\frac{1}{2}} \\
        \leq&\left(\sum_{\tilde{m}}\frac{1}{|\tilde{m}|^{4\alpha}}\right)^{\frac{1}{2}}\left(\sum_{\tilde{m}} f_1^2(\tilde{m})\right)^{\frac{1}{2}}\norm{f_1}_{l^2}\norm{g}_{l^2} \\
        \leq&\left( \sum_{\tilde{m}} \frac{1}{|\tilde{m}|^{4\alpha}}\right)^{\frac{1}{2}}\norm{f_1}^2_{l^2}\norm{g}_{l^2}.
    \end{align*}
    The first term is summable if
    \begin{equation*}
        \alpha>\frac{1}{2}.
    \end{equation*}
\item[2)]$|h|\ll|\tilde{m}|\Rightarrow |k|\sim|\tilde{m}|$.
     \begin{equation*}
        \frac{\langle k\rangle^s}{|k|^{2\alpha}}\frac{|k|}{|h|^{s+1}|\tilde{m}|^s}\sim\frac{1}{|k|^{2\alpha-1}|h|^{s+1}}\overbrace{\leq}^{\alpha>\frac{1}{2}} \frac{1}{|h|^{2\alpha-1}|h|^{s+1}}\leq \frac{1}{|h|^{2\alpha}}.
    \end{equation*}
    Then,
    \begin{align*}
         \sum_k\sum_{k=h+\tilde{m}}& \frac{1}{|h|^{2\alpha}}f_1(h)f_1(\tilde{m})g(k)\\
        &=\sum_h\frac{1}{|h|^{2\alpha}}f_1(h)\sum_{\tilde{m}} f_1(\tilde{m})g(h+\tilde{m})\\
        &\leq\left( \sum_h \frac{1}{|h|^{4\alpha}}\right)^{\frac{1}{2}}\norm{f_1}^2_{l^2}\norm{g}_{l^2},
    \end{align*}
    which is summable if $\alpha>\frac{1}{2}$.
    
    \item[3)] $|h|\sim|\tilde{m}|\Rightarrow |k|\sim|h| \sim |\tilde{m}|$.
    \begin{equation*}
        \frac{\langle k\rangle^s}{|k|^{2\alpha}}\frac{|k|}{|h|^{s+1}|\tilde{m}|^s}\lesssim\frac{1}{|h|^{2\alpha}}.
    \end{equation*}
    We conclude as in the previous case.
\end{itemize}
By $1),2),3)$ we get
\begin{align*}
    s&\geq0 ;\\
    \alpha&>\frac{1}{2}.
\end{align*}
For \eqref{in3}, if we assume $\alpha\geq \frac{1}{2}$
\begin{align*}
    \sum_{k}c_0\frac{1}{|k|^{2\alpha-1}}f_1(k)g(k)&\leq c_0\left(\sum_k f_1^2(k)\right)^{\frac{1}{2}}\left(\sum_k g^2(k)\right)^{\frac{1}{2}}\\
    &\sim\norm{f_1}_{l^2}\norm{g}_{l_2},
\end{align*}
where we used $|k|\geq1$.

Finally, we study \eqref{in4}. We have
\begin{align*}
    &\sum_{k}\frac{\langle k\rangle^s}{|k|^{2\alpha}}g(k)\sum_{\substack{k_1=h_1 \\ k_2=h_2+m_2}}\frac{|h|}{\langle h\rangle^s\langle m_2\rangle^s}f_1(h)f_2(m_2)\\
    \sim&\sum_{k}\frac{| k|^s}{|k|^{2\alpha}}g(k)\sum_{\substack{k_1=h_1 \\ k_2=h_2+m_2}}\frac{|h|}{| h|^s |m_2|^s}f_1(h)f_2(m_2).
\end{align*}
\begin{itemize}
    \item [1)] $|h|\gg|m|=|m_2|\Rightarrow|k|\sim|h|$. Then,
    \begin{equation*}
        \frac{|k|^s}{|k|^{2\alpha}}\frac{|h|}{|h|^s|m_2|^s}\sim \frac{1}{|h|^{2\alpha-1}|m_2|^s}.
    \end{equation*}
    If $\alpha>1/2$,
    \begin{equation*}
        \leq\frac{1}{|m_2|^{s-1+2\alpha}},
    \end{equation*}
    we get,
    \begin{align*}
        &\sum_{k}\sum_{\substack{k_1=h_1 \\ k_2=h_2+m_2}}\frac{1}{|m_2|^{2\alpha-1+s}}f_1(h_1,h_2)f_2(m_2)g(k_1,k_2)\\
        &\leq \sum_{k_2=h_2+m_2}\frac{1}{|m_2|^{2\alpha-1+s}}f_2(m_2)\overbrace{\left(\sum_{k_1}|f_1(k_1,h_2)|^2\right)^{\frac{1}{2}}}^{w(h_2)}\overbrace{\left(\sum_{k_1}|g(k_1,k_2)|^2\right)^{\frac{1}{2}}}^{v(k_2)}\\
        &=\sum_{k_2}\sum_{m_2}\frac{1}{|m_2|^{2\alpha-1+s}}f_2(m_2)w(k_2-m_2)v(k_2)\\
        &\leq \sum_{m_2}\frac{1}{|m_2|^{2\alpha-1+s}}f_2(m_2)\norm{f_1}_{l^2}\norm{g}_{l^2}\\
        &\leq\left(\sum_{m_2}\frac{1}{|m_2|^{4\alpha-2+2s}}\right)^{\frac{1}{2}}\norm{f_1}_{l^2}\norm{f_2}_{l^2}\norm{g}_{l^2}
    \end{align*}
    which is summable if $4\alpha-2+2s>1$. If $s=0$, then $\alpha>\frac{3}{4}$.
    \item[2)] $|m_2|\gg|h|$ $\Rightarrow$ $|k|\sim |m_2|$. Then,
    \begin{equation*}
        \frac{|k|^s}{|k|^{2\alpha}}\frac{|h|}{|h|^s|m_2|^s}\lesssim \frac{|m_2|}{|m_2|^{2\alpha}|h|^s}= \frac{1}{|m_2|^{2\alpha-1}|h|^s}\leq\frac{1}{|m_2|^{2\alpha-1}}
    \end{equation*}
    recalling that $|h|\geq1$ and $s\geq 0$. We conclude as in the previous case.
     
     \item[3)] $|m_2|\sim|h|$ $\Rightarrow$ $|k|\sim|m_2|\sim |h|$.Then,
    \begin{equation*}
        \frac{|k|^s}{|k|^{2\alpha}}\frac{|h|}{|h|^s|m_2|^s}\sim \frac{1}{|m_2|^{2\alpha-1+s}}
    \end{equation*}
     and we conclude as in the previous case.
\end{itemize}
Putting all the conditions on $\alpha$ and $s$ together, we get 
\begin{align*}
    s&\geq 0;\\
    1>\alpha&>\frac{3}{4}.
\end{align*}
By definition of $f_1$ and $f_2$ and using \eqref{uniformbound2}, we derive
\begin{align*}
    \bigg\|\int_0^t e^{\tilde{D}^2(t-t')}&F(\gamma,t') dt'\bigg\|_{ H_{xy}^s}\\
    &\lesssim\int_0^t\frac{1}{(t-t')^\alpha}dt'\left(\norm{\gamma}^2_{ H^s_{xy}}+\norm{\mu}_{ H^s_y}\norm{\gamma}_{ H^s_{xy}}+\norm{\gamma}_{ H^s_{xy}}\right)\\
    &\sim t^{1-\alpha}\left(\norm{\gamma}^2_{ H^s_{xy}}+\norm{\mu}_{ H^s_y}\norm{\gamma}_{ H^s_{xy}}+\norm{\gamma}_{ H^s_{xy}}\right).
\end{align*}
Finally,
\begin{align*}
    \bigg\|\int_0^t e^{\tilde{D}^2(t-t')}&F(\gamma,t') dt'\bigg\|_{L^\infty_\delta H_{xy}^s}\\
    &\lesssim\delta^{1-\alpha}\left(\norm{\gamma}^2_{L^\infty_\delta H^s_{xy}}+\norm{\mu}_{L^\infty_\delta H^s_y}\norm{\gamma}_{L^\infty_{\delta} H^s_{xy}}+\norm{\gamma}_{L^\infty_{\delta} H^s_{xy}}\right).
\end{align*}
for $s\geq0$ and $\frac{3}{4}<\alpha<1$.
\end{proof}

Let us introduce the following functional,
\begin{equation}\label{phi2}
    \Phi_2(\mu,\gamma)=e^{t\tilde{D}^2}\gamma_0+\int_0^t e^{\tilde{D}^2(t-t')}F(\mu,\gamma)dt'.
\end{equation}
An immediate consequence of the previous proposition is,

\begin{cor}\label{corupperbound}
Let $t\in [0,\delta]$, $\delta\in \mathbb{R}_{+}$, $s\geq 0$, $\alpha\in(\frac{3}{4},1)$. Then,
\begin{align*}
    \|\Phi_2(\mu,\gamma)&\|_{L^\infty_\delta H^s_{xy}}\\
    &\lesssim \norm{\gamma_0}_{H^s_{xy}}+\delta^{1-\alpha}\norm{\gamma}_{L^\infty_\delta H^s_{xy}}\left(\norm{\gamma}_{L^\infty_\delta H^s_{xy}}+\norm{\mu}_{L^\infty_\delta H^s_{y}}+1\right).
\end{align*}
\end{cor}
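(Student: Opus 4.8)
The plan is to proceed exactly as in the proof of Corollary \ref{corollariou}: split $\Phi_2$ via the triangle inequality into the contribution of the linear evolution of the initial datum and the contribution of the Duhamel integral, and estimate the two pieces separately. Concretely, I would first write
\begin{equation*}
    \norm{\Phi_2(\mu,\gamma)}_{L^\infty_\delta H^s_{xy}}\leq \underbrace{\norm{e^{t\tilde{D}^2}\gamma_0}_{L^\infty_\delta H^s_{xy}}}_{A}+\underbrace{\norm{\int_0^t e^{\tilde{D}^2(t-t')}F(\mu,\gamma)\,dt'}_{L^\infty_\delta H^s_{xy}}}_{B}.
\end{equation*}
The term $B$ is bounded directly by Proposition \ref{bound}, which for $s\geq 0$ and $\tfrac34<\alpha<1$ gives
\begin{equation*}
    B\lesssim \delta^{1-\alpha}\left(\norm{\gamma}^2_{L^\infty_\delta H^s_{xy}}+\norm{\mu}_{L^\infty_\delta H^s_y}\norm{\gamma}_{L^\infty_{\delta} H^s_{xy}}+\norm{\gamma}_{L^\infty_{\delta} H^s_{xy}}\right).
\end{equation*}

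For the term $A$ I would use Plancherel together with the explicit Fourier symbol of $\tilde{D}^2=\nu\nabla^2-C_1\partial_x\nabla^{-2}$, namely $-\nu|k|^2+iC_1\tfrac{k_1}{|k|^2}$, whose real part $-\nu|k|^2$ is nonpositive. Hence $|e^{t\tilde{D}^2}\widehat{\gamma_0}(k)|=e^{-t\nu|k|^2}|\widehat{\gamma_0}(k)|\leq|\widehat{\gamma_0}(k)|$ for every $t\geq 0$ and every $k$, so that
\begin{equation*}
    A=\sup_{t\in[0,\delta]}\left(\sum_k e^{-2t\nu|k|^2}|\widehat{\gamma_0}(k)|^2\langle k\rangle^{2s}\right)^{\frac12}\leq \left(\sum_k |\widehat{\gamma_0}(k)|^2\langle k\rangle^{2s}\right)^{\frac12}=\norm{\gamma_0}_{H^s_{xy}}.
\end{equation*}

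Finally I would combine the two bounds and factor $\norm{\gamma}_{L^\infty_\delta H^s_{xy}}$ out of the three summands appearing in the estimate of $B$, arriving at
\begin{equation*}
    \norm{\Phi_2(\mu,\gamma)}_{L^\infty_\delta H^s_{xy}}\lesssim \norm{\gamma_0}_{H^s_{xy}}+\delta^{1-\alpha}\norm{\gamma}_{L^\infty_\delta H^s_{xy}}\left(\norm{\gamma}_{L^\infty_\delta H^s_{xy}}+\norm{\mu}_{L^\infty_\delta H^s_{y}}+1\right),
\end{equation*}
which is the claim. I do not expect any genuine obstacle here: all the analytic work is already contained in Proposition \ref{bound}, and the only point requiring a remark is the uniform-in-time boundedness of the propagator $e^{t\tilde{D}^2}$ on $H^s$, which is immediate from the sign of the dissipative part of its symbol (the purely imaginary dispersive part $iC_1 k_1/|k|^2$ contributes only a unimodular factor and is harmless).
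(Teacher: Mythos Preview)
Your proposal is correct and follows essentially the same approach as the paper: both split $\Phi_2$ via the triangle inequality into the linear propagator term $A$ and the Duhamel term $B$, invoke Proposition~\ref{bound} for $B$, and bound $A$ by Plancherel using that the symbol of $e^{t\tilde{D}^2}$ has modulus at most one. Your remark that the dispersive part $iC_1k_1/|k|^2$ contributes only a unimodular factor makes explicit what the paper leaves implicit in its one-line estimate of $A$.
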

\begin{proof}
\begin{equation*}
    \norm{\Phi_2(\gamma)}_{L^\infty_\delta H^s_{xy}}\leq \underbrace{\norm{e^{t\tilde{D}^2}\gamma_0}_{L^\infty_\delta H^s_{xy}}}_{A}+\underbrace{ \norm{\int_0^t e^{\tilde{D}^2(t-t')}F(\gamma,t') dt'}_{L^\infty_\delta H_{xy}^s}}_{B}
\end{equation*}
By Proposition \ref{bound} we have
\begin{equation*}
    B\lesssim\delta^{1-\alpha}\left(\norm{\gamma}^2_{L^\infty_\delta H^s_{xy}}+\norm{\mu}_{L^\infty_\delta H^s_{y}}\norm{\gamma}_{L^\infty_\delta H^s_{xy}}+\norm{\gamma}_{L^\infty_\delta H^s_{xy}}\right).
\end{equation*}
On the other hand, 
\begin{align*}
    A=\norm{\mathfrak{F}(e^{t\tilde{D}^2}\gamma_0)}_{H^s}&=\left(\sum_k\left|e^{(i\frac{k_1}{|k|^2}-\nu|k|^2)}\widehat{\gamma}_0(k)\right|^2\langle k\rangle^{2s}\right)^\frac{1}{2}\\
    &\lesssim\left(\sum_k|\widehat{\gamma}_0(k)|^2\langle k\rangle^{2s}\right)^\frac{1}{2}\\
    &=\norm{\gamma_0}_{H^s_{xy}}.
\end{align*}
\end{proof}

\subsubsection{The $\Phi$-Functional}
With $\Phi_1$ defined in \eqref{phi1} and $\Phi_2$ defined in \eqref{phi2}, let us define 
\begin{align*}
    \Phi&(\mu,\gamma)=(\Phi_1,\Phi_2)\\
    &=\left(e^{\nu t\partial_{yy}}\mu_0+\int_0^t e^{\nu(t-t')\partial_{yy}}G(\gamma)\ dt',\ e^{\tilde{D}^2t}\gamma_0+\int_0^t e^{\tilde{D}^2(t-t')}F(\mu,\gamma)\ dt' \right)
\end{align*}
By Corollary \ref{corollariou} and \ref{corupperbound} we have
\begin{cor}
Let $t\in [0,\delta]$, $\delta\in \mathbb{R}_{+}$, $s\geq 0$, $\alpha\in(\frac{3}{4},1)$. Then,
\begin{align}\label{upperboundsystem}
    &\norm{\Phi(\mu,\gamma)}_{X^{s,\delta}}\leq C_1\norm{\mu_0}_{H^s_{y}}+ C_2\norm{\gamma_0}_{H^s_{xy}}\\
    &+C_3\delta^{1-\alpha}\norm{\gamma}^2_{L^\infty_\delta H^s_{xy}}+C_4\delta^{1-\alpha}\norm{\gamma}_{L^\infty_\delta H^s_{xy}}\left(\norm{\gamma}_{L^\infty_{\delta} H^s_{xy}}+\norm{\mu}_{L^\infty_\delta H^s_y}+1\right).\notag
\end{align}
\end{cor}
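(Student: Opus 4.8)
The plan is simply to add the two bounds already obtained. By the very definition of the $X^{s,\delta}$-norm we have
\[
\norm{\Phi(\mu,\gamma)}_{X^{s,\delta}}=\norm{\Phi_1(\mu,\gamma)}_{L^\infty_\delta H^s_{y}}+\norm{\Phi_2(\mu,\gamma)}_{L^\infty_\delta H^s_{xy}},
\]
so it is enough to insert the estimate of Corollary~\ref{corollariou} for the first summand and that of Corollary~\ref{corupperbound} for the second, and then relabel constants. Note that Corollary~\ref{corollariou} is valid for $\alpha\in(\frac12,1)$, a range that contains the more restrictive $\alpha\in(\frac34,1)$ imposed in the statement; hence for any such $\alpha$ both inputs are simultaneously available. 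This compatibility of the two admissible ranges of $\alpha$ is really the only point that requires attention.

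First I would check that $\Phi(\mu,\gamma)$ actually belongs to $X^{s,\delta}$, i.e.\ that the zero Fourier modes of $\Phi_1$ and $\Phi_2$ vanish. For $\Phi_1$ this holds because $\widehat{\mu_0}(0)=0$ by construction (cf.~\eqref{condizionefour}), while the outer $\partial_y$ produces the prefactor $ik_2$ in $\widehat{G(\gamma)}(k_2)$, forcing $\widehat{G(\gamma)}(0)=0$, a property preserved by $e^{\nu t\partial_{yy}}$. For $\Phi_2$ one uses $\widehat{\gamma_0}(0)=0$ from~\eqref{condfour2} together with the fact that each of the four pieces of $\widehat{F(\mu,\gamma)}(k)$ vanishes at $k=0$: the quadratic sum because $h_2m_1-h_1m_2=0$ when $m=-h$; the $\mu$-term and the last sum because of their explicit factors $h_1$ and $k_2$; and the $c_0$-term because of its factor $k_1$. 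Since $e^{t\tilde D^2}$ again preserves this, $\Phi(\mu,\gamma)\in X^{s,\delta}$.

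Adding the two estimates then gives
\[
\norm{\Phi(\mu,\gamma)}_{X^{s,\delta}}\lesssim\norm{\mu_0}_{H^s_{y}}+\norm{\gamma_0}_{H^s_{xy}}+\delta^{1-\alpha}\norm{\gamma}^2_{L^\infty_\delta H^s_{xy}}+\delta^{1-\alpha}\norm{\gamma}_{L^\infty_\delta H^s_{xy}}\left(\norm{\gamma}_{L^\infty_\delta H^s_{xy}}+\norm{\mu}_{L^\infty_\delta H^s_{y}}+1\right),
\]
and naming the four implied constants $C_1,\dots,C_4$ produces exactly~\eqref{upperboundsystem}. There is no genuine obstacle in this step: it is a bookkeeping consequence of Corollaries~\ref{corollariou} and~\ref{corupperbound}, the substantive analytic work having already been carried out in Propositions~\ref{prop1u} and~\ref{bound}.
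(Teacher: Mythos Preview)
Your proof is correct and follows the same approach as the paper: split $\norm{\Phi}_{X^{s,\delta}}$ into the $\Phi_1$- and $\Phi_2$-pieces, invoke Corollaries~\ref{corollariou} and~\ref{corupperbound}, and relabel constants. The one extra ingredient you provide---the explicit verification that $\widehat{\Phi_1}(0)=0$ and $\widehat{\Phi_2}(0)=0$, so that $\Phi(\mu,\gamma)$ actually lies in $X^{s,\delta}$---is a worthwhile sanity check that the paper leaves implicit, but it does not alter the route.
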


\begin{proof}
Since 
\begin{equation*}
    \norm{\Phi(\mu,\gamma)}_{X^{s,\delta}}=\norm{\Phi_1(\mu,\gamma)}_{L^\infty_t H^s_y}+\norm{\Phi_2(\mu,\gamma)}_{L^\infty_\delta H^s_{xy}},
\end{equation*}
we use the computations made in Corollary \ref{corollariou} and \ref{corupperbound} which give us 
\begin{align*}
    \norm{\Phi_1(\mu,\gamma)}_{L^\infty_t H^s_{xy}}&\leq C_1\norm{\mu_0}_{H^s_{y}}+C_3\delta^{1-\alpha}\norm{\gamma}^2_{L^\infty_\delta H^s_{xy}}\\
    \norm{\Phi_2(\mu,\gamma)}_{L^\infty_t H^s_{xy}}&\leq C_3\norm{\gamma_0}_{H^s_{xy}}\\
    &+C_4\delta^{1-\alpha}\norm{\gamma}_{L^\infty_\delta H^s_{xy}}\left(\norm{\gamma}_{L^\infty_\delta H^s_{xy}}+\norm{\mu}_{L^\infty_\delta H^s_{y}}+1\right).
\end{align*}
Combining the results we get \eqref{upperboundsystem}.
\end{proof}

\subsection{Local Well-Posedness}

Let us consider      
\begin{equation*}
    B(0,R)\subset{X^{s,\delta}}
\end{equation*}
where $R:=2\left(C_1\norm{\mu_0}_{H^s_{y}}+ C_2\norm{\gamma_0}_{H^s_{xy}}\right)$. If we set $C_1,C_2\gg 1$ such that $R\geq1$ and we fix $\delta$ sufficiently small such that 
\begin{align}\label{conditiondelta}
      C_i\delta^{1-\alpha}R<\frac{1}{8},\text{ with } i=3,4,
\end{align}
then by \eqref{upperboundsystem} we get 
\begin{equation*}
    \Phi:B(0,R)\to B(0,R).
\end{equation*}
We remark that \eqref{conditiondelta} implies
\begin{equation}\label{explicitdelta}
    \delta\sim R^{-\frac{1}{1-\alpha}}\sim(\norm{\gamma_0}_{H^s_{xy}}+\norm{\mu_0}_{H^s_y})^{-\frac{1}{1-\alpha}}.
\end{equation}

\subsubsection{Contraction}
We now show that $\Phi$ is a contraction on $B(0,R)$. As in the previous section, we proceed by doing one estimate at a time and then combine the results.

\begin{prop}\label{2u}
Let $(\mu_1, \gamma_1),\ (\mu_2, \gamma_2)\in 
B(0,R) 
$ be vectors functions such that $\mu_{1}(0,y)=\mu_{2}(0,y)=:\mu_0$ and let $s\geq 0$, $\alpha\in(\frac{1}{2},1)$ Then, 
\begin{equation*}
    \norm{\Phi_1(\mu_1,\gamma_1)-\Phi_1(\mu_2,\gamma_2)}_{L^\infty_{\delta}H^s_{y}}\lesssim \delta^{1-\alpha}R\norm{(\mu_1,\gamma_1)-(\mu_2,\gamma_2)}_{X^{s,\delta}}.
\end{equation*}
\end{prop}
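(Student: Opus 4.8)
The plan is to use the cancellation of the free evolution and the bilinearity of $G$, reducing the estimate to the bilinear analogue of Proposition~\ref{prop1u}. Since $\mu_1(0,y)=\mu_2(0,y)=\mu_0$, the linear pieces $e^{\nu t\partial_{yy}}\mu_0$ in $\Phi_1$ are identical and cancel in the difference, so that
\begin{equation*}
\Phi_1(\mu_1,\gamma_1)-\Phi_1(\mu_2,\gamma_2)=\int_0^t e^{\nu(t-t')\partial_{yy}}\bigl(G(\gamma_1)-G(\gamma_2)\bigr)\,dt'.
\end{equation*}
In particular this difference depends only on $\gamma_1-\gamma_2$ and not on $\mu_1-\mu_2$, which is consistent with the target estimate, whose right-hand side dominates $\|\gamma_1-\gamma_2\|_{L^\infty_\delta H^s_{xy}}$.

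Next I would linearise $G$. Setting $B(\phi,\psi):=\partial_y\overline{[\partial_x\nabla^{-2}\phi\,\partial_y\nabla^{-2}\psi]}$, so that $G(\gamma)=B(\gamma,\gamma)$, the elementary identity $ab-cd=a(b-d)+(a-c)d$ gives
\begin{equation*}
G(\gamma_1)-G(\gamma_2)=B(\gamma_1,\gamma_1-\gamma_2)+B(\gamma_1-\gamma_2,\gamma_2).
\end{equation*}
Then I would observe that the proof of Proposition~\ref{prop1u} applies to $B(\phi,\psi)$ in place of $G(\gamma)$ with no change: on the Fourier side the symbol of $B(\phi,\psi)$ is, up to a sign, $\frac{h_1 m_2 k_2}{|h|^2|m|^2}\widehat{\phi}(h)\widehat{\psi}(m)$ with $h_1+m_1=0$, $h_2+m_2=k_2$, again bounded in modulus by $\frac{|k_2|}{|h||m|}|\widehat{\phi}(h)||\widehat{\psi}(m)|$, and in that proof the two factors were already handled as two independent $\ell^2$ sequences $f_1,f_2$. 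Hence the same three-case analysis (needing only $\alpha>\frac14$, hence valid for $\alpha\in(\frac12,1)$) yields
\begin{equation*}
\left\|\int_0^t e^{\nu(t-t')\partial_{yy}}B(\phi,\psi)\,dt'\right\|_{L^\infty_\delta H^s_y}\lesssim \delta^{1-\alpha}\,\|\phi\|_{L^\infty_\delta H^s_{xy}}\|\psi\|_{L^\infty_\delta H^s_{xy}},\qquad s\ge0.
\end{equation*}

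Finally I would apply this bilinear bound to the two summands above, with $(\phi,\psi)=(\gamma_1,\gamma_1-\gamma_2)$ and $(\phi,\psi)=(\gamma_1-\gamma_2,\gamma_2)$, obtaining a control by $\delta^{1-\alpha}\bigl(\|\gamma_1\|_{L^\infty_\delta H^s_{xy}}+\|\gamma_2\|_{L^\infty_\delta H^s_{xy}}\bigr)\|\gamma_1-\gamma_2\|_{L^\infty_\delta H^s_{xy}}$. Since $(\mu_i,\gamma_i)\in B(0,R)$ forces $\|\gamma_i\|_{L^\infty_\delta H^s_{xy}}\le R$, and $\|\gamma_1-\gamma_2\|_{L^\infty_\delta H^s_{xy}}\le\|(\mu_1,\gamma_1)-(\mu_2,\gamma_2)\|_{X^{s,\delta}}$, this is $\lesssim \delta^{1-\alpha}R\,\|(\mu_1,\gamma_1)-(\mu_2,\gamma_2)\|_{X^{s,\delta}}$, which is the assertion. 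There is no genuine obstacle here; the only point requiring care is the bookkeeping of the bilinearisation and verifying that the estimate of Proposition~\ref{prop1u} is insensitive to replacing $\gamma$ by two distinct functions, which it is, since its proof uses only the convolution structure and three applications of Cauchy--Schwarz.
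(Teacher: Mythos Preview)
Your proposal is correct and follows essentially the same approach as the paper: cancel the linear evolution, bilinearize $G(\gamma_1)-G(\gamma_2)$, observe that the proof of Proposition~\ref{prop1u} already treats the two factors as independent $\ell^2$ sequences $f_1,f_2$, and conclude using $\|\gamma_i\|_{L^\infty_\delta H^s_{xy}}\le R$. The only cosmetic difference is the choice of splitting (you use $ab-cd=a(b-d)+(a-c)d$, the paper uses $ab-cd=(a-c)b+c(b-d)$), which is immaterial.
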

\begin{proof}
By definition we have
\begin{align*}
    \Phi_1(\mu_1,\gamma_1)&=e^{\nu t\partial_{yy}}\mu_0+\int_0^t e^{\nu(t-t')\partial_{yy}}G(\gamma_1) dt',\\
    \Phi_1(\mu_2,\gamma_2)&=e^{\nu t\partial_{yy}}\mu_0+\int_0^t e^{\nu(t-t')\partial_{yy}}G(\gamma_2) dt'.
\end{align*}
Then,
\begin{equation*}
    \Phi_1(\mu_1,\gamma_1)-\Phi_1(\mu_2,\gamma_2)=\int_0^t e^{\nu(t-t')\partial_{yy}}\left(G(\gamma_1)-G(\gamma_2)\right) dt'
\end{equation*}
where,
\begin{align*}
    G(\gamma_1)-G(\gamma_2)&=\partial_y\left(\overline{[\partial_x\nabla^{-2}\gamma_1\partial_y\nabla^{-2}\gamma_1]-[\partial_x\nabla^{-2}\gamma_2\partial_y\nabla^{-2}\gamma_2]}\right)\\
    =&\partial_y\left(\overline{[\partial_x\nabla^{-2}(\gamma_1-\gamma_2)\partial_y\nabla^{-2}\gamma_1]+[\partial_x\nabla^{-2}\gamma_2\partial_y\nabla^{-2}(\gamma_1-\gamma_2)]}\right)
\end{align*}
and 
\begin{align*}
      \norm{\Phi_1(\mu_1,\gamma_1)-\Phi_1(\mu_2,\gamma_2)}_{L^\infty_\delta H_{y}^s}=\norm{\int_0^t e^{\nu(t-t')\partial_{yy}}\left(G(\gamma_1)-G(\gamma_2)\right) dt'}_{L^\infty_\delta H_{y}^s}.
\end{align*}
As seen in Proposition \ref{prop1u}, by duality we can study the following two equivalent quantities
\begin{align*}
    \sup_{\norm{g}_{l^2}\leq 1}&\sum_{k_2}\sum_{\substack{h_1 \\k_2=h_2+m_2}}\frac{V(k_2,h,\tilde{m})}{\langle h\rangle^s\langle \tilde{m}\rangle^s}f_1(h)f_2(\tilde{m})g(k_2)\lesssim\norm{f_1}_{l^2}\norm{f_2}_{l^2}\norm{g}_{l^2}\\
    \sup_{\norm{g}_{l^2}\leq 1}&\sum_{k_2}\sum_{\substack{h_1 \\k_2=h_2+m_2}}\frac{V(k_2,h,\tilde{m})}{\langle h\rangle^s\langle \tilde{m}\rangle^s}f_3(h)f_1(\tilde{m})g(k_2)\lesssim \norm{f_1}_{l^2}\norm{f_3}_{l^2}\norm{g}_{l^2}.
\end{align*}
where
\begin{align*}
    \tilde{m}&:=(-h_1,m_2);\\
    V(k_2,h,\tilde{m})&:=\frac{\langle k_2\rangle^s}{|k_2|^{2\alpha-1}|h||\tilde{m}|};\\
    f_1(k)&:=\langle k\rangle^s|(\widehat{\gamma_1-\gamma_2})(k)|;\\
    f_2(k)&:=\langle k\rangle^s|\widehat{\gamma_1}(k)|;\\
    f_3(k)&:=\langle k\rangle^s|\widehat{\gamma_2}(k)|.
\end{align*}
Since we have the same coefficients as in the Proposition \ref{prop1u}, using the same computations we get
\begin{align*}
      \|\Phi_1(\mu_1,\gamma_1)-\Phi_1(\mu_2,\gamma_2)&\|_{L^\infty_\delta H_{y}^s}\\
      &\lesssim \delta^{1-\alpha}\norm{\gamma_1-\gamma_2}_{L^\infty_\delta H^s_{xy}}\left(\norm{\gamma_1}_{L^\infty_\delta H^s_{xy}}+\norm{\gamma_2}_{L^\infty_\delta H^s_{xy}}\right)\\
      &\lesssim \delta^{1-\alpha}R\norm{\gamma_1-\gamma_2}_{L^\infty_\delta H^s_{xy}}\\
      &\lesssim \delta^{1-\alpha}R\norm{(\mu_1,\gamma_1)-(\mu_2,\gamma_2)}_{X^{s,\delta}}.
\end{align*}
for $s\geq0$ and $\frac{1}{2}<\alpha<1$.
\end{proof}

\begin{prop}\label{contraction}
Let $(\mu_1,\gamma_1),\ (\mu_2,\gamma_2)\in B(0,R)$ be vectors functions such that $\gamma_1(0,x,y)=\gamma_2(0,x,y)=:\gamma_0(x,y)$ and let $s\geq 0$, $\alpha\in(\frac{3}{4},1)$. Then, 
\begin{equation*}
    \norm{\Phi_2(\mu_1,\gamma_1)-\Phi_2(\mu_2,\gamma_2)}_{L^\infty_{\delta}H^s_{xy}}\lesssim(1+3R) \delta^{1-\alpha}\norm{(\mu_1,\gamma_1)- (\mu_2,\gamma_2)}_{X^{s,\delta}},
\end{equation*}
with $\alpha\in\left(\frac{3}{4},1\right)$.
\end{prop}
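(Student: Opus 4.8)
The plan is to use the cancellation of the free evolution together with the multilinear structure of $F$, reducing everything to the Fourier-space estimates already obtained in Proposition \ref{bound}. Since $\gamma_1(0,x,y)=\gamma_2(0,x,y)=\gamma_0$, the terms $e^{t\tilde{D}^2}\gamma_0$ in $\Phi_2(\mu_1,\gamma_1)$ and $\Phi_2(\mu_2,\gamma_2)$ cancel, so that
\begin{equation*}
    \Phi_2(\mu_1,\gamma_1)-\Phi_2(\mu_2,\gamma_2)=\int_0^t e^{\tilde{D}^2(t-t')}\big(F(\mu_1,\gamma_1)-F(\mu_2,\gamma_2)\big)\,dt'.
\end{equation*}

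First I would expand $F(\mu_1,\gamma_1)-F(\mu_2,\gamma_2)$ by the usual telescoping trick. Writing $\gamma_1-\gamma_2$ and $\mu_1-\mu_2$ for the differences, each bilinear term $(\partial_y\nabla^{-2}\gamma)\gamma_x$, $(\partial_x\nabla^{-2}\gamma)\gamma_y$, $\partial_y\overline{[(\partial_x\nabla^{-2}\gamma)\gamma]}$ splits into two pieces, one carrying $\gamma_1-\gamma_2$ and $\gamma_1$, the other carrying $\gamma_2$ and $\gamma_1-\gamma_2$; the term $\mu\gamma_x$ splits as $(\mu_1-\mu_2)\gamma_{1,x}+\mu_2(\gamma_1-\gamma_2)_x$; and the linear term $c_0\gamma_x$ simply produces $c_0(\gamma_1-\gamma_2)_x$. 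I would also record that $\gamma_1-\gamma_2$ and $\mu_1-\mu_2$ still have vanishing zero Fourier mode, by definition of $X^{s,\delta}$, so that Remark \ref{remarknozero} applies to every factor appearing in the expansion.

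Each of the resulting pieces carries, on the Fourier side, exactly the same multiplier as one of the quantities handled in the proof of Proposition \ref{bound}. Hence I would bound $\norm{\Phi_2(\mu_1,\gamma_1)-\Phi_2(\mu_2,\gamma_2)}^2_{H^s_{xy}}\leq\int_0^t\norm{e^{\tilde{D}^2(t-t')}\big(F(\mu_1,\gamma_1)-F(\mu_2,\gamma_2)\big)}^2_{H^s_{xy}}\,dt'$, extract the factor $\big((t-t')\nu|k|^2\big)^{-\alpha}$ via the uniform bound \eqref{uniformbound2}, and run the same duality argument: for the pieces coming from the bilinear $\gamma$-terms I invoke \eqref{in1} and \eqref{in2} with $f_1$ taken alternately to be $\langle k\rangle^s|\widehat{\gamma_1-\gamma_2}|$ or $\langle k\rangle^s|\widehat{\gamma_j}|$ (the polarization of the quadratic estimate being a plain Cauchy--Schwarz step); for the piece from $\mu\gamma_x$ I use \eqref{in4} with $f_1,f_2$ chosen as the relevant difference or non-difference functions; and for $c_0(\gamma_1-\gamma_2)_x$ I use \eqref{in3}. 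The admissible range is unchanged, namely $s\geq0$ and $\frac{3}{4}<\alpha<1$, so no new restriction appears.

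Finally I would collect the estimates: using $(\mu_j,\gamma_j)\in B(0,R)$ to bound the non-difference factors by $R$, and $\norm{\gamma_1-\gamma_2}_{L^\infty_\delta H^s_{xy}}+\norm{\mu_1-\mu_2}_{L^\infty_\delta H^s_y}\leq\norm{(\mu_1,\gamma_1)-(\mu_2,\gamma_2)}_{X^{s,\delta}}$ for the difference factors, every bilinear contribution is of size $\lesssim R\,\delta^{1-\alpha}\norm{(\mu_1,\gamma_1)-(\mu_2,\gamma_2)}_{X^{s,\delta}}$ while the $c_0$-contribution is of size $\lesssim \delta^{1-\alpha}\norm{(\mu_1,\gamma_1)-(\mu_2,\gamma_2)}_{X^{s,\delta}}$; summing the finitely many terms and passing to the supremum over $t\in[0,\delta]$ as in Proposition \ref{bound}, and recalling $R\geq1$, yields the stated bound. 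I do not expect a genuine obstruction here: the only care needed is the bookkeeping of which factor plays the role of $f_1$ versus $f_2$ in each telescoped piece, and the verification that the zero-mode condition transfers to the differences, so that the estimates of Proposition \ref{bound} are literally applicable.
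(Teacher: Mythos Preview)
Your proposal is correct and follows essentially the same route as the paper: cancel the free evolution, telescope $F(\mu_1,\gamma_1)-F(\mu_2,\gamma_2)$ term by term, and feed each piece into the duality estimates \eqref{in1}--\eqref{in4} already established in Proposition~\ref{bound}, then bound the non-difference factors by $R$. The only cosmetic difference is that you telescope $\mu\gamma_x$ as $(\mu_1-\mu_2)\gamma_{1,x}+\mu_2(\gamma_1-\gamma_2)_x$ whereas the paper uses $\mu_1(\gamma_1-\gamma_2)_x+(\mu_1-\mu_2)\gamma_{2,x}$, which is immaterial.
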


\begin{proof}
By definition we have 
\begin{align*}
    \Phi_2(\mu_1,\gamma_1)&=e^{t\tilde{D}^2}\gamma_0+\int_0^t e^{(t-t')\tilde{D}^2}F(\mu_1,\gamma_1)\ dt'\\
    \Phi_2(\mu_2,\gamma_2)&=e^{t\tilde{D}^2}\gamma_0+\int_0^t e^{(t-t')\tilde{D}^2}F(\mu_2,\gamma_2)\ dt',
\end{align*}
where
\begin{align*}
    F(\mu,\gamma)=\overbrace{\gamma_x\partial_y\nabla^{-2}\gamma}^{A}-\overbrace{\gamma_y\partial_x\nabla^{-2}\gamma}^{B}+\overbrace{\partial_y[\overline{(\gamma\partial_x\nabla^{-2}\gamma)}}^{C} ]-\overbrace{\mu\gamma_x}^{D}+\overbrace{c_0\gamma_x}^{E}. 
\end{align*}
Then,
\begin{align*}
    &\norm{\Phi_2(\mu_1,\gamma_1)-\Phi_2(\mu_2,\gamma_2)}_{L^\infty_{\delta}H^s_{xy}}\\
    &=\norm{\int_0^t e^{(i\frac{k_1}{|k|^2}-\nu|k|^2)(t-t')}(F(\mu_1,\gamma_1)-F(\mu_2,\gamma_2))\ dt'}_{L^\infty_{\delta}H^s_{x,y}}\\
    &=\sup_{t\in[0,\delta]} \left(\sum_k \left|\int_0^t e^{(i\frac{k_1}{|k|^2}-\nu|k|^2)(t-t')}(\widehat{F(\mu_1,\gamma_1)-F(\mu_2,\gamma_2)})\ dt'\right|^2\langle k\rangle^{2s}\right)^{\frac{1}{2}}.
\end{align*}
We observe that
\begin{align*}
    A_{\gamma_1}&={\gamma_1}_x\partial_y\nabla^{-2}\gamma_1; \ \ \ \ A_{\gamma_2}={\gamma_2}_x\partial_y\nabla^{-2}\gamma_2.\\
    A_{\gamma_1}-A_{\gamma_2}&={\gamma_1}_x\partial_y\nabla^{-2}\gamma_1-{\gamma_2}_x\partial_y\nabla^{-2}{\gamma_2}\\
    &=({\gamma_1}_x-{\gamma_2}_x)\partial_y\nabla^{-2}\gamma_1+{\gamma_2}_x\partial_y\nabla^{-2}(\gamma_1-{\gamma_2}).
\end{align*}
\begin{align*}
    B_{\gamma_1}&={\gamma_1}_y\partial_x\nabla^{-2}\gamma_1;\ \ \ \ B_{\gamma_2}={\gamma_2}_y\partial_x\nabla^{-2}{\gamma_2}.\\
    B_{\gamma_1}-B_{\gamma_2}&={\gamma_1}_y\partial_x\nabla^{-2}\gamma_1-{\gamma_2}_y\partial_x\nabla^{-2}{\gamma_2}\\
    &=({\gamma_1}_y-{\gamma_2}_y)\partial_x\nabla^{-2}\gamma_1+{\gamma_2}_y\partial_x\nabla^{-2}(\gamma_1-{\gamma_2}).
\end{align*}
\begin{align*}        C_{\gamma_1}&=\partial_y\overline{(\gamma_1\partial_x\nabla^{-2}\gamma_1)}; \ \ \ \ C_{\gamma_2}=\partial_y\overline{({\gamma_2}\partial_x\nabla^{-2}{\gamma_2})}.\\
    C_{\gamma_1}-C_{\gamma_2}&=\partial_y\overline{(\gamma_1\partial_x\nabla^{-2}\gamma_1)}-\partial_y\overline{({\gamma_2}\partial_x\nabla^{-2}{\gamma_2})}\\
    &=\partial_y\overline{((\gamma_1-{\gamma_2})\partial_x\nabla^{-2}\gamma_1)}+\partial_y\overline{({\gamma_2}\partial_x\nabla^{-2}(\gamma_1-{\gamma_2}))}
\end{align*}
\begin{align*}
    D_{\mu_1,\gamma_1}&=\mu_1{\gamma_1}_x; \ \ \ \ D_{\mu_2,\gamma_2}=\mu_2{\gamma_2}_x.\\
    D_{\mu_1,\gamma_1}-D_{\mu_2,\gamma_2}&=\mu_1{\gamma_1}_x-\mu_2{\gamma_2}_x\\
    &=\mu_1(\gamma_1-\gamma_2)_x+(\mu_1-\mu_2){\gamma_2}_x
\end{align*}
\begin{align*}
    E_{\gamma_1}&=c_0{\gamma_1}_x; \ \ \ \ E_{\gamma_2}=c_0{\gamma_2}_x.\\
    E_{\gamma_1}-E_{\gamma_2}&=c_0\left({\gamma_1}_x-{\gamma_2}_x\right).
\end{align*}
Then,
\begin{align*}
    F(\mu_1,\gamma_1)-F({\mu_2,\gamma_2})&=({\gamma_1}_x-{\gamma_2}_x)\partial_y\nabla^{-2}\gamma_1+{\gamma_2}_x\partial_y\nabla^{-2}(\gamma_1-{\gamma_2})\\
    &-({\gamma_1}_y-{\gamma_2}_y)\partial_x\nabla^{-2}\gamma_1-{\gamma_2}_y\partial_x\nabla^{-2}(\gamma_1-{\gamma_2})\\
    &+\partial_y\overline{((\gamma_1-{\gamma_2})\partial_x\nabla^{-2}\gamma_1)}+\partial_y\overline{({\gamma_2}\partial_x\nabla^{-2}(\gamma_1-{\gamma_2}))}\\
    &-\mu_1(\gamma_1-\gamma_2)_x-(\mu_1-\mu_2){\gamma_2}_x+c_0\left({\gamma_1}_x-{\gamma_2}_x\right),
\end{align*}
and
\begin{align*}
    &\left(\widehat{F(\mu_1,\gamma_1)-F({\mu_2,\gamma_2})}\right)(k)\\
    &=ik_1(\widehat{\gamma_1-{\gamma_2}})(k)*(-i\frac{k_2}{|k|^2}\widehat{\gamma_1})(k)+(ik_1\widehat{{\gamma_2}})(k)*(-i\frac{k_2}{|k|^2}(\widehat{\gamma_1-{\gamma_2}}))(k)\\
    &-ik_2(\widehat{\gamma_1-{\gamma_2}})(k)*(-i\frac{k_1}{|k|^2}\widehat{\gamma_1})(k)-(ik_2\widehat{{\gamma_2}})(k)*(-i\frac{k_1}{|k|^2}(\widehat{\gamma_1-{\gamma_2}}))(k)\\
    &+\left.ik_2\left((\widehat{\gamma_1-{\gamma_2}})(k)*(-i\frac{k_1}{|k|^2}\widehat{\gamma_1})(k)+\widehat{{\gamma_2}}(k)*(-i\frac{k_1}{|k|^2})(\widehat{\gamma_1-{\gamma_2}})(k)\right)\right|_{[0,k_2]}\\
    &-\widehat{\mu_1}(k_2)*ik_1(\widehat{\gamma_1-{\gamma_2}})(k)-(\widehat{\mu_1-\mu_2})(k_2)*ik_1\widehat{\gamma_2}(k)+c_0ik_1(\widehat{\gamma-{\gamma_2}})(k)
\end{align*}
\begin{align*}
    &=\sum_{k=h+m}\frac{m_1h_2-m_2h_1}{|h|^2}\big[(\widehat{\gamma_1-{\gamma_2}})(m)\widehat{\gamma_1}(h)+(\widehat{\gamma_1-{\gamma_2}})(h)\widehat{{\gamma_2}}(m)\big]\\
    &+\sum_{(0,k_2)}\frac{k_2h_1}{|h|^2}\big[(\widehat{\gamma_1-{\gamma_2}})(m)\widehat{\gamma_1}(h)+(\widehat{\gamma_1-{\gamma_2}})(h)\widehat{{\gamma_2}}(m)\big]\\
    &-\sum_{\substack{k_1=h_1+0 \\k_2=h_2+m_2}} ih_1\big[\widehat{\mu_1}(m_2)(\widehat{\gamma_1-{\gamma_2}})(h)+(\widehat{\mu_1-\mu_2})(m_2)\widehat{\gamma_2}(h)\big]\\
    &+c_0ik_1(\widehat{\gamma_1-{\gamma_2}})(k).
\end{align*}
Therefore,
\begin{align*}
    \|\Phi_2(\mu_1,\gamma_1)-\Phi_2(&\mu_2,\gamma_2)\|_{L^\infty_{\delta}H^s_{xy}}\\
    =\sup_{t\in[0,\delta]} \left(\sum_k\right. &\left|\int_0^t\right. e^{(i\frac{k_1}{|k|^2}-\nu|k|^2)(t-t')}\\
    \cdot\left(\sum_{k=h+m}\right.&\frac{m_1h_2-m_2h_1}{|h|^2}\big[(\widehat{\gamma_1-{\gamma_2}})(m)\widehat{\gamma_1}(h)+(\widehat{\gamma_1-{\gamma_2}})(h)\widehat{{\gamma_2}}(m)\big]\\
    +&\sum_{\substack{k_1=0 \\k_2=h_2+m_2}}\frac{k_2h_1}{|h|^2}\big[(\widehat{\gamma_1-{\gamma_2}})(m)\widehat{\gamma_1}(h)+(\widehat{\gamma_1-{\gamma_2}})(h)\widehat{{\gamma_2}}(m)\big]\\
    -&\sum_{\substack{k_1=h_1+0 \\k_2=h_2+m_2}} ih_1\big[\widehat{\mu_1}(m_2)(\widehat{\gamma_1-{\gamma_2}})(h)+(\widehat{\mu_1-\mu_2})(m_2)\widehat{\gamma_2}(h)\big]\\
    +&c_0ik_1(\widehat{\gamma_1-{\gamma_2}})(k)\Bigg) dt'\bigg|^2\langle k\rangle^{2s}\Bigg)^{\frac{1}{2}}.
\end{align*}
As in the proof of Proposition \ref{bound}, by duality it is sufficient to show 
\begin{align*}
    &\sup_{\norm{g}_{l^2}\leq 1}\sum_k 2\frac{\langle k\rangle^{s}}{|k|^{2\alpha}}\sum_{k=h+m}\frac{|m|}{|h|}\frac{1}{\langle h\rangle^s\langle m\rangle^s}f_1(h)f_2(m)g(k)\lesssim\norm{f_1}_{l^2}\norm{f_2}_{l^2}\norm{g}_{l^2}
\end{align*}
\begin{align*}
    &\sup_{\norm{g}_{l^2}\leq 1}\sum_k 2\frac{\langle k\rangle^{s}}{|k|^{2\alpha}}\sum_{k=h+m}\frac{|m|}{|h|}\frac{1}{\langle h\rangle^s\langle m\rangle^s}f_2(h)f_3(m)g(k)\lesssim\norm{f_2}_{l^2}\norm{f_3}_{l^2}\norm{g}_{l^2}
\end{align*}
\begin{align*}
    &\sup_{\norm{g}_{l^2}\leq 1}\sum_{k}\frac{\langle k\rangle^s}{|k|^{2\alpha}}\sum_{\substack{k_1=0 \\ k_2=h_2+m_2}}\frac{|k|}{|h|\langle h\rangle^s\langle m\rangle^s}f_1(h)f_2(m)g(k)\lesssim\norm{f_1}_{l^2}\norm{f_2}_{l^2}\norm{g}_{l^2}
\end{align*}
\begin{align*}
    &\sup_{\norm{g}_{l^2}\leq 1}\sum_{k}\frac{\langle k\rangle^s}{|k|^{2\alpha}}\sum_{\substack{k_1=0 \\ k_2=h_2+m_2}}\frac{|k|}{|h|\langle h\rangle^s\langle m\rangle^s}f_2(h)f_3(m)g(k)\lesssim\norm{f_2}_{l^2}\norm{f_3}_{l^2}\norm{g}_{l^2}
\end{align*}
\begin{align*}
    &\sup_{\norm{g}_{l^2}\leq 1}\sum_{k}\frac{\langle k\rangle^s}{|k|^{2\alpha}}\sum_{\substack{k_1=h_1 \\ k_2=h_2+m_2}}\frac{|h|}{\langle h\rangle^s\langle m_2\rangle^s}f_2(h)f_4(m_2)g(k)\lesssim\norm{f_2}_{l^2}\norm{f_4}_{l^2}\norm{g}_{l^2}
\end{align*}
\begin{align*}
    &\sup_{\norm{g}_{l^2}\leq 1}\sum_{k}\frac{\langle k\rangle^s}{|k|^{2\alpha}}\sum_{\substack{k_1=h_1 \\ k_2=h_2+m_2}}\frac{|h|}{\langle h\rangle^s\langle m_2\rangle^s}f_3(h)f_5(m_2)g(k)\lesssim\norm{f_3}_{l^2}\norm{f_5}_{l^2}\norm{g}_{l^2}\\
    &\sup_{\norm{g}_{l^2}\leq 1}\sum_{k}c_0\frac{|k|}{|k|^{2\alpha}}f_2(k)g(k)\lesssim\norm{f_2}_{l^2}\norm{g}_{l^2},
\end{align*}
where we define
\begin{align*}
    f_1(k):=&\langle k \rangle^s|\widehat{\gamma_1}(k)|\\
    f_2(k):=&\langle k \rangle^s|(\widehat{\gamma_1-{\gamma_2}})(k)|\\
    f_3(k):=&\langle k \rangle^s|\widehat{{\gamma_2}}(k)|\\
    f_4(k_2):=&\langle k_2 \rangle^s|\widehat{\mu_1}(k_2)|\\
    f_5(k_2):=&\langle k_2 \rangle^s|(\widehat{\mu_1-\mu_2})(k_2)|.
\end{align*}
Since the coefficients of the sum are exactly those of Proposition \ref{bound}, we have
\begin{align*}
    s&\geq0,\\
    \alpha&>\frac{3}{4}
\end{align*}
and
\begin{align*}
    \|\Phi_2(\mu_1,\gamma_1)-\Phi_2({\mu_2,\gamma_2})&\|_{H^s_{xy}}\lesssim\int_0^t\frac{1}{(t-t')^\alpha}\ dt'(2\norm{\gamma_1-{\gamma_2}}_{H^s_{xy}}\norm{\gamma_1}_{H^s_{xy}}\\
    &+2\norm{\gamma_1-{\gamma_2}}_{H^s_{xy}}\norm{{\gamma_2}}_{H^s_{xy}}+\norm{\gamma_1-{\gamma_2}}_{H^s_{xy}}\norm{\mu_1}_{H^s_{y}}\\
    &+\norm{{\gamma_2}}_{H^s_{xy}}\norm{\mu_1-\mu_2}_{H^s_{y}}+\norm{\gamma_1-{\gamma_2}}_{H^s_{xy}})\\
    \sim t^{1-\alpha}&\left(\norm{\gamma_1-{\gamma_2}}_{H^s_{xy}}(\norm{\gamma_1}_{H^s_{xy}}+\norm{{\gamma_2}}_{H^s_{xy}}+\norm{\mu_1}_{H^s_{xy}}+1)\right.\\
    &\left.+\norm{{\gamma_2}}_{H^s_{xy}}\norm{\mu_1-\mu_2}_{H^s_{y}}\right)\\
    \lesssim (1+&3R) t^{1-\alpha}\left(\norm{\gamma_1-{\gamma_2}}_{H^s_{xy}}+\norm{\mu_1-\mu_2}_{H^s_{y}}\right).
\end{align*}
Then,
\begin{equation*}
    \norm{\Phi_2(\mu_1,\gamma_1)-\Phi_2({\mu_2,\gamma_2})}_{L^\infty_{\delta}H^s_{x,y}}\lesssim (1+3R) \delta^{1-\alpha}\norm{(\mu_1,\gamma_1)-(\mu_2,{\gamma_2})}_{X^{s,\delta}}.
\end{equation*}
with $s\geq0$ and $\frac{3}{4}< \alpha < 1$.
\end{proof}

Finally, combining Proposition \ref{2u} and \ref{contraction} we get
\begin{prop} 
Let $(\mu_1,\gamma_1),\ (\mu_2,\gamma_2)\in B(0,R)\subset{X^{s,\delta}}$ be vector functions and let $s\geq0$ and $\alpha\in(\frac{3}{4},1)$. Then 
\begin{equation}\label{contractionsystem}
    \norm{\Phi(\mu_1,\gamma_1)-\Phi(\mu_2,\gamma_2)}_{{X^{s,\delta}}}\leq C (1+3R) \delta^{1-\alpha}\norm{(\mu_1,\gamma_1)- (\mu_2,\gamma_2)}_{X^{s,\delta}}.
\end{equation}
\end{prop}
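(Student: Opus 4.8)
The plan is to obtain \eqref{contractionsystem} as an immediate consequence of the two component-wise contraction estimates already in hand, Proposition \ref{2u} for $\Phi_1$ and Proposition \ref{contraction} for $\Phi_2$; no new harmonic analysis is required. First I would unfold the definitions of $\Phi=(\Phi_1,\Phi_2)$ and of the norm on $X^{s,\delta}$ to write
\begin{equation*}
    \norm{\Phi(\mu_1,\gamma_1)-\Phi(\mu_2,\gamma_2)}_{X^{s,\delta}}
    =\norm{\Phi_1(\mu_1,\gamma_1)-\Phi_1(\mu_2,\gamma_2)}_{L^\infty_\delta H^s_y}
    +\norm{\Phi_2(\mu_1,\gamma_1)-\Phi_2(\mu_2,\gamma_2)}_{L^\infty_\delta H^s_{xy}},
\end{equation*}
so that it suffices to control each summand separately.

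For this to work I would first record the bookkeeping point that makes both component propositions applicable at once: since $(\mu_1,\gamma_1)$ and $(\mu_2,\gamma_2)$ are iterates of the fixed-point map attached to the same data $(\mu_0,\gamma_0)$, we have $\mu_1(0,\cdot)=\mu_2(0,\cdot)=\mu_0$ and $\gamma_1(0,\cdot)=\gamma_2(0,\cdot)=\gamma_0$, which are precisely the hypotheses required in Proposition \ref{2u} and Proposition \ref{contraction} respectively. Then Proposition \ref{2u} yields
\begin{equation*}
    \norm{\Phi_1(\mu_1,\gamma_1)-\Phi_1(\mu_2,\gamma_2)}_{L^\infty_\delta H^s_y}\lesssim \delta^{1-\alpha}R\,\norm{(\mu_1,\gamma_1)-(\mu_2,\gamma_2)}_{X^{s,\delta}},
\end{equation*}
and Proposition \ref{contraction} yields
\begin{equation*}
    \norm{\Phi_2(\mu_1,\gamma_1)-\Phi_2(\mu_2,\gamma_2)}_{L^\infty_\delta H^s_{xy}}\lesssim (1+3R)\,\delta^{1-\alpha}\,\norm{(\mu_1,\gamma_1)-(\mu_2,\gamma_2)}_{X^{s,\delta}}.
\end{equation*}
Adding the two bounds and using $R\le 1+3R$ (valid since $R\ge 1$ by the choice of $C_1,C_2$), the first contribution is absorbed into the second up to a constant, giving \eqref{contractionsystem} with $C$ the maximum of the implied constants. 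The range $s\ge 0$, $\alpha\in(\tfrac34,1)$ is exactly the intersection of the ranges in the two propositions, the lower bound $\tfrac34$ being forced by the $\mu\gamma_x$-type term through \eqref{in4}.

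The main analytic obstacle has therefore already been cleared inside Propositions \ref{prop1u} and \ref{bound} (and their contraction counterparts), where the bilinear and trilinear frequency sums were estimated by duality and Cauchy–Schwarz; at the level of the present statement there is no genuine difficulty, and the only step demanding a moment's care is the initial-data bookkeeping above, which guarantees that both component estimates are legitimately in force simultaneously. In this sense the proposition is essentially a corollary of the preceding two results, and I would present it as a short two-line deduction.
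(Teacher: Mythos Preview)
Your proposal is correct and follows essentially the same approach as the paper: decompose the $X^{s,\delta}$-norm into its two components, invoke Propositions \ref{2u} and \ref{contraction} for each, and add. The paper's version is slightly terser (it omits your initial-data bookkeeping remark and the observation $R\le 1+3R$), but the argument is identical in substance.
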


\begin{proof}
Since
\begin{align*}
    \norm{\Phi(\mu_1,\gamma_1)-\Phi(\mu_2,\gamma_2)}_{{X^{s,\delta}}}&=\norm{\Phi_1(\mu_1,\gamma_1)-\Phi_1(\mu_2,\gamma_2)}_{L^\infty_{\delta} H^s_y}\\
    &+\norm{\Phi_2(\mu_1,\gamma_1)-\Phi_2(\mu_2,\gamma_2)}_{L^\infty_{\delta} H^s_{xy}},
\end{align*}
and recalling that from Propositions \ref{2u} and \ref{contraction} we have
\begin{align*}
     \norm{\Phi_1(\mu_1,\gamma_1)-\Phi_1(\mu_2,\gamma_2)}&_{L^\infty_{\delta}H^s_{y}}\leq C_1\delta^{1-\alpha}R\norm{(\mu_1,\gamma_1)-(\mu_2,\gamma_2)}_{X^{s,\delta}}\\
     \norm{\Phi_2(\mu_1,\gamma_1)-\Phi_2(\mu_2,\gamma_2)}&_{L^\infty_{\delta}H^s_{xy}}\leq C_2 (1+3R) \delta^{1-\alpha}\norm{(\mu_1,\gamma_1)- (\mu_2,\gamma_2)}_{X^{s,\delta}}
\end{align*}
we get \eqref{contractionsystem}.
\end{proof}
If we set $\delta$ sufficiently small such that
\begin{equation*}
    C(1+3R)\delta^{1-\alpha}\leq\frac{1}{2},
\end{equation*}
then $\Phi$ is a contraction on $B(0,R)$.

\subsubsection{Well-Posedness on $X^{s,\delta}$}

\begin{cor}\label{fixedpointsystem}
Under the previous assumption,
\begin{equation*}
    \exists !\ (\mu,\gamma)\in B(0,R)\subset X^{s,\delta} \ | \ (\mu,\gamma)=\Phi(\mu,\gamma). 
\end{equation*}
\end{cor}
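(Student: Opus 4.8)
The plan is to conclude by the Banach fixed-point theorem, for which essentially all the analytic work has already been carried out in the preceding propositions. First I would record that $X^{s,\delta}$ is a Banach space: it is the closed subspace of $L^\infty\!\left([0,\delta];H^s(\mathbb{T}_l)\times H^s(\mathbb{T}_l^2)\right)$ cut out by the two continuous linear conditions $\widehat{f}(0)=0$ and $\widehat{g}(0)=0$, and these conditions are stable under limits in the $X^{s,\delta}$-norm; hence the closed ball $B(0,R)$, equipped with the metric induced by $\norm{\cdot}_{X^{s,\delta}}$, is a complete metric space, which is the setting in which the abstract theorem applies.

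Second, I would fix $\delta>0$ small enough that the two smallness requirements already derived hold simultaneously. The self-mapping condition is \eqref{conditiondelta}, namely $C_i\delta^{1-\alpha}R<\tfrac18$ for $i=3,4$; together with the bound \eqref{upperboundsystem} and the choice $R=2\big(C_1\norm{\mu_0}_{H^s_y}+C_2\norm{\gamma_0}_{H^s_{xy}}\big)$ this yields $\Phi\big(B(0,R)\big)\subseteq B(0,R)$. The contraction condition is $C(1+3R)\delta^{1-\alpha}\le\tfrac12$, which by \eqref{contractionsystem} makes $\Phi$ Lipschitz on $B(0,R)$ with constant at most $\tfrac12$. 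Both requirements have the form $\delta^{1-\alpha}\le c(R)$ with $1-\alpha>0$, so they are met at once for all sufficiently small $\delta$; concretely one may keep $\delta\sim R^{-1/(1-\alpha)}$ as in \eqref{explicitdelta}, shrinking the implied constant if necessary.

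Finally, with $\big(B(0,R),\norm{\cdot}_{X^{s,\delta}}\big)$ a complete metric space and $\Phi$ a strict contraction of it into itself, the contraction mapping principle produces a unique $(\mu,\gamma)\in B(0,R)$ with $(\mu,\gamma)=\Phi(\mu,\gamma)$, which is exactly the statement of the corollary. I do not expect a genuine obstacle at this stage: the nonlinear and dispersive difficulties were absorbed into Propositions \ref{prop1u}, \ref{bound}, \ref{2u} and \ref{contraction}, so the only thing to be careful about is the bookkeeping — checking that a single $\delta$ serves both the stability and the contraction estimates, and that it depends on the initial data only through $R$, i.e.\ through $\norm{\mu_0}_{H^s_y}+\norm{\gamma_0}_{H^s_{xy}}$. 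This last point is precisely what will later permit the local solution to be restarted and iterated forward in time.
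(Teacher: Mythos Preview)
Your proposal is correct and follows exactly the paper's own approach: the paper's proof is the single line ``By fixed point theorem,'' and you have simply unpacked the verification that $B(0,R)$ is a complete metric space on which $\Phi$ is a self-map and a strict contraction, using the already-established estimates \eqref{upperboundsystem}, \eqref{conditiondelta}, and \eqref{contractionsystem}. There is nothing to add or correct.
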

\begin{proof}
By fixed point theorem.
\end{proof}

Next step is to extend the existence and uniqueness of the solution from $B(0,R)$ to the whole space $X^{s,\delta}$.
\begin{prop}\label{estensionesoluzionesystem}
Suppose there exists $(\tilde{\mu},\tilde{\gamma})\in X^{s,\delta}$, $s\geq0$, solution of \eqref{sistemariscritto} such that 
\begin{align*}
    (\tilde{\mu},\tilde{\gamma})=&\left(e^{\nu t\partial_{yy}}\mu_0+\int_0^t e^{\nu(t-t')\partial_{yy}}G(\tilde{\gamma})\ dt';\right.\\ &\left.e^{\tilde{D}^2t}\gamma_0+\int_0^t e^{\tilde{D}^2(t-t')}F(\tilde{\mu},\tilde{\gamma})\ dt' \right)\text{ on }{X^{s,\delta}}.
\end{align*}
Let $(\mu,\gamma)\in B(0,R)$ be the functions defined in Corollary \ref{fixedpointsystem}. Then,
\begin{equation*}
    (\tilde{\mu},\tilde{\gamma})=(\mu,\gamma) \text{ on } X^{s,\delta}.
\end{equation*}
\end{prop}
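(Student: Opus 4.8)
The plan is to use that $(\tilde\mu,\tilde\gamma)$ and $(\mu,\gamma)$ are \emph{both} fixed points of the map $\Phi$ --- the latter living in the ball $B(0,R)$ where $\Phi$ is a contraction, the former only known to belong to the larger space $X^{s,\delta}$ --- and to propagate uniqueness from the ball to the whole space by a bootstrap over a finite chain of short subintervals. One cannot simply invoke the contraction estimate \eqref{contractionsystem} directly on $[0,\delta]$: the time $\delta$ furnished by the local theory is small relative to $R$, but not relative to the a priori size of $(\tilde\mu,\tilde\gamma)$. The key point is that the Lipschitz bound proved in Propositions~\ref{2u} and \ref{contraction} is not really tied to the radius $R$: re-reading those proofs, $R$ enters only through the crude estimates $\norm{\gamma_i}_{L^\infty_\delta H^s_{xy}}\le R$ and $\norm{\mu_i}_{L^\infty_\delta H^s_y}\le R$. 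Hence, setting $M:=\norm{(\tilde\mu,\tilde\gamma)}_{X^{s,\delta}}+R<\infty$, the same computation yields, for every $\tau\in(0,\delta]$ and for any two solutions of the integral (Duhamel) reformulation of \eqref{sistemariscritto} on $[0,\tau]$ agreeing at $t=0$,
\begin{equation*}
    \norm{\Phi(\tilde\mu,\tilde\gamma)-\Phi(\mu,\gamma)}_{X^{s,\tau}}\le C\,(1+M)\,\tau^{1-\alpha}\,\norm{(\tilde\mu,\tilde\gamma)-(\mu,\gamma)}_{X^{s,\tau}},
\end{equation*}
with $C$ universal, using that restriction to a subinterval only decreases the relevant norms.

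Before iterating I would record two routine facts. First, both $(\tilde\mu,\tilde\gamma)$ and $(\mu,\gamma)$ admit representatives in $C\big([0,\delta];\,H^s(\mathbb{T}_l)\cross H^s(\mathbb{T}^2_l)\big)$: the linear parts $e^{\nu t\partial_{yy}}\mu_0$ and $e^{t\tilde{D}^2}\gamma_0$ are continuous in time by the strong continuity of the Fourier-multiplier semigroups $e^{-t\nu k_2^2}$ and $e^{t(ik_1/|k|^2-\nu|k|^2)}$ on $H^s$, while the Duhamel integrals are $(1-\alpha)$-H\"older continuous in time by exactly the estimates of Propositions~\ref{prop1u} and \ref{bound} applied on the relevant subintervals. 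This makes the pointwise value of each solution at an intermediate time a well-defined element of $H^s\cross H^s$. Second, these semigroups plainly satisfy the composition identity $e^{(t-t')\mathcal{L}}=e^{(t-\tau)\mathcal{L}}\,e^{(\tau-t')\mathcal{L}}$, so the integral system can be re-based at any time $\tau$: on $[\tau,\delta]$ the solution equals the Duhamel solution whose ``initial datum'' is its own value at $\tau$.

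With these in hand the iteration is immediate. Fix $\delta_1\in(0,\delta]$ so small that $C(1+M)\,\delta_1^{1-\alpha}\le\frac{1}{2}$; note that $\delta_1$ depends only on $M$ and $\alpha$. Since $\tilde\mu(0)=\mu_0=\mu(0)$, $\tilde\gamma(0)=\gamma_0=\gamma(0)$, and both are fixed points of $\Phi$, the displayed inequality with $\tau=\delta_1$ forces
\begin{equation*}
    \norm{(\tilde\mu,\tilde\gamma)-(\mu,\gamma)}_{X^{s,\delta_1}}\le\frac{1}{2}\,\norm{(\tilde\mu,\tilde\gamma)-(\mu,\gamma)}_{X^{s,\delta_1}},
\end{equation*}
hence equality on $[0,\delta_1]$. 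Re-basing the system at $\delta_1$, where now $\tilde\mu(\delta_1)=\mu(\delta_1)$ and $\tilde\gamma(\delta_1)=\gamma(\delta_1)$, the identical estimate on $[\delta_1,\min\{2\delta_1,\delta\}]$ --- with the same constant $C(1+M)$ and the same interval length $\delta_1$ --- gives equality there as well. Iterating $\lceil\delta/\delta_1\rceil$ times, a finite number depending only on $M$ and $\alpha$, covers the whole of $[0,\delta]$ and proves $(\tilde\mu,\tilde\gamma)=(\mu,\gamma)$ on $X^{s,\delta}$.

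I do not expect a genuine obstacle here. The two preliminary facts are forced by the Duhamel structure and by the explicit form of the multipliers, and the only point deserving care is the bookkeeping showing that the Lipschitz constant of $\Phi$ degrades only from $1+3R$ to $1+O(M)$ once the arguments are no longer assumed to lie in $B(0,R)$ --- this is a line-by-line re-reading of Propositions~\ref{2u} and \ref{contraction} rather than anything new. The conceptual content is simply that local-in-time uniqueness inside the ball, together with the freedom to re-start the Duhamel formulation, propagates to uniqueness on all of $[0,\delta]$ by covering the interval with finitely many pieces whose common length is controlled solely by $M$.
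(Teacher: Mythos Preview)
Your proposal is correct and follows essentially the same line as the paper's proof: restrict to a sufficiently short subinterval so that the Lipschitz estimate for $\Phi$ has coefficient at most $\tfrac{1}{2}$, deduce equality there, and then iterate by re-basing the Duhamel formulation at the right endpoint until $[0,\delta]$ is covered.

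In fact you are \emph{more} careful than the paper on one point. The paper simply cites the contraction bound \eqref{contractionsystem} with constant $C(1+3R)$, but that bound was proved under the assumption that both arguments lie in $B(0,R)$, whereas $(\tilde\mu,\tilde\gamma)$ is only known to lie in $X^{s,\delta}$. You correctly observe that the proofs of Propositions~\ref{2u} and~\ref{contraction} only use $R$ as an upper bound for the norms of the arguments, so the same computation gives a Lipschitz constant $C(1+M)$ with $M=\norm{(\tilde\mu,\tilde\gamma)}_{X^{s,\delta}}+R$; this is the right constant to use when choosing the small step $\delta_1$. Your additional remarks on time-continuity of the Duhamel representation and on the semigroup identity needed to re-base are likewise implicit in the paper's iteration step but not spelled out there.
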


\begin{proof}
Fix $0<\varepsilon<\delta$. By \eqref{contractionsystem}
\begin{align*}
     \norm{(\mu,\gamma)-(\tilde{\mu},\tilde{\gamma})}_{X^{s,\varepsilon}}&=\norm{\Phi(\mu,\gamma)-\Phi(\tilde{\mu},\tilde{\gamma})}_{L^\infty_{\varepsilon}H^s_{xy}}\\
     &\leq C (1+3R) \delta^{1-\alpha}\norm{(\mu_1,\gamma_1)- (\mu_2,\gamma_2)}_{X^{s,\varepsilon}}.
\end{align*}
Set $\varepsilon>0$ sufficiently small such that
\begin{equation*}
    C (1+3R) \varepsilon^{1-\alpha}\leq \frac{1}{2}.
\end{equation*}
Then,
\begin{equation*}
    \norm{(\mu,\gamma)-(\tilde{\mu},\tilde{\gamma})}_{X^{s,\varepsilon}}\leq \frac{1}{2}\norm{(\mu,\gamma)-(\tilde{\mu},\tilde{\gamma})}_{X^{s,\varepsilon}}
\end{equation*}
which is possible if and only if $(\tilde{\mu},\tilde{\gamma})=(\mu,\gamma)$  on  $X^{s,\varepsilon}$.\\
In particular $(\tilde{\mu}(\varepsilon),\tilde{\gamma}(\varepsilon))=(\mu(\varepsilon),\gamma(\varepsilon))$, therefore we can repeat the same argument on interval $[\varepsilon, 2\varepsilon]$ until we cover $[0,\delta]$.
\end{proof}

\section{Global Well-Posedness for Eddy-Mean Vorticity System}

\subsection{Upper Bounds of $L^2$-Norms by Initial Data}
To provide global well-posedness of \eqref{sistemariscritto}, an upper bound for the $L^2$ norms of $\overline{u}$ and $\zeta'$ is required, as mentioned in Step 4) of the introduction. This upper bound can be obtained from the norm of the initial data, as follows.

Let us start by recalling the equation \eqref{vorticityeq} that governs the dynamics of the problem, which is given by
%The dynamics is given by \eqref{vorticityeq}
\begin{equation*}
    \partial_t \zeta+ J(\psi,\zeta+\beta y)=\nu\nabla^2\zeta,
\end{equation*}
where $J(\psi,\zeta+\beta y)=u(\zeta+\beta y)_x+v(\zeta+\beta y)_y$, $u=-\psi_y,\ v=\psi_x$ and $\zeta=\nabla^2\psi$.

\textbf{Energy equation:} Multiplying the above equation by the stream function $\psi$ and integrating in space we get
\begin{equation*}
    \left<\psi\partial_t\zeta\right>+\left<\psi J(\psi,\zeta+\beta y)\right>=\left<\nu\psi\nabla^2\nabla^2\psi\right>,
\end{equation*}
where 
\begin{equation*}
    \left<\cdot\right>=\frac{1}{l^2}\iint \cdot\ dxdy.
\end{equation*}
\begin{rem}
\begin{itemize}
    \item [1)] $div(f\nabla v)=\nabla f \cdot \nabla v + f \nabla^2v $;  $f,v:\mathbb{R}^n\to\mathbb{R}$.
    \item[2)] $J(\psi,\psi(\zeta+\beta y))=\psi J(\psi,\zeta+\beta y)$. \\
    In fact, 
    \begin{align*}
        J(\psi,\psi(\zeta+\beta y))&=u(\psi(\zeta+\beta y))_x+v(\psi(\zeta+\beta y))_y\\
        &=u\psi_x(\zeta+\beta y)+u\psi(\zeta+\beta y)_x+v\psi_y(\zeta+\beta y)+v\psi(\zeta+\beta y)_y\\
        &=J(\psi,(\zeta+\beta y))\psi.
    \end{align*}
\end{itemize}
\end{rem}
Then, 
\begin{align}\label{energyrelation}
    \oint \psi \nabla\psi_t\cdot \widehat{n} dl-\left<\nabla\psi\cdot\nabla\psi_t\right>+\left<J(\right.&\left.\psi,\psi(\zeta+\beta y))\right>\\
    &=\nu\oint\psi\nabla\zeta\cdot\widehat{n} dl-\left<\nu\nabla\psi\cdot\nabla\nabla^2\psi\right>.\notag
\end{align}
Since we have periodic boundary conditions and incompressibility, equation \eqref{energyrelation} becomes
\begin{equation*}
    \partial_t\left(\frac{1}{2}\left<|\nabla\psi|^2\right>\right)=-\nu\left<\psi_{xx}^2+2\psi_{xy}^2+\psi_{yy}^2\right>.
\end{equation*}
In particular 
\begin{equation*}
    \partial_t\left(\frac{1}{2}\left<|\nabla\psi|^2\right>\right)\leq 0,
\end{equation*}
so that 
\begin{equation*}
    \left<|\nabla\psi|^2\right>\leq \left<|\nabla\psi_0|^2\right>.
\end{equation*}
We can now separate mean and eddy contributions:
\begin{align}
\left<|\nabla\psi|^2\right>&=\left<u^2+v^2\right>=\left<(\overline{u}+u')^2+v'^2\right>\notag\\
    &=\left<\overline{u}^2+2u'\overline{u}+u'^2+v'^2\right>\notag\\
    &=\left<\overline{u}^2\right>+\left<u'^2+v'^2\right>\leq \left<|\nabla\psi_0|^2\right>.\label{energy}
\end{align}
This implies that both $\left<\overline{u}^2\right>$ and $\left<u'^2+v'^2\right>$ are bounded.

\textbf{Enstrophy equation:} Similarly to the previous case, we have 
\begin{equation*}
    \left<\zeta\zeta_t\right>+\left<\zeta J(\psi,\zeta+\beta y)\right>=\nu\left<\zeta\nabla^2\zeta\right>
\end{equation*}
\begin{equation*}
    \Rightarrow \partial_t\left(\frac{1}{2}\left<\zeta^2\right>\right)=-\nu\left<|\nabla\zeta|^2\right>.
\end{equation*}
Then,
\begin{equation*}
    \left<\zeta^2\right>\leq\left<\zeta_0^2\right>.
\end{equation*}
Separating mean and eddy contributions 
\begin{align}
    \left<\zeta^2\right>&=\left<(-\overline{u}_y+\zeta')^2\right>\notag\\
    &=\left<\overline{u}^2_y-2\overline{u}_y\zeta'+\zeta'^2\right>\notag\\
    &=\left<\overline{u}_y^2\right>+\left<\zeta'^2\right>\leq\left<\zeta_0^2\right> \label{enstrophy}
\end{align}
and therefore $\left<\zeta'^2\right>$ is also bounded. 

We have thus proved
\begin{prop}\label{initialdatabound}
Let $\left(\mu_0(y), \gamma_0(x,y)\right)\in H^s(\mathbb{T}_l)\cross H^s(\mathbb{T}^2_l)$ be the initial data of problem \eqref{sistemariscritto}. Then, using notation \eqref{notatmugamma}, we have
\begin{equation}
    \norm{\gamma(t)}_{L^2_{xy}}\leq \norm{\gamma_0}_{L^2_{xy}} \text{ and }\ \norm{\mu(t)}_{L^2_{y}}\lesssim\norm{\mu_0}_{L^2_{y}} \ \ \ \ \forall \ t\geq 0,
\end{equation}
with $C>0$.
\end{prop}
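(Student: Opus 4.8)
The plan is to establish the two $L^2$ bounds by separate arguments: for the eddy vorticity $\gamma=\zeta'$ a direct energy identity for the eddy vorticity equation \eqref{eddyvorticityequation}, and for the jet profile $\mu=\overline{u}-c_0$ the total energy inequality \eqref{energy} combined with the conservation of the zonal mean $c_0$ established just before the statement.

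First I would treat $\gamma$. Testing \eqref{eddyvorticityequation} against $\zeta'$ and integrating over $\mathbb{T}^2_l$, every contribution other than the viscous one vanishes: $\int\overline{u}\,\zeta'\zeta'_x=\tfrac12\int\overline{u}\,\partial_x(\zeta'^2)=0$ by $x$-periodicity; the term $(\beta-\overline{u}_{yy})\int\zeta'v'$ vanishes because $\overline{u}_{yy}$ is constant — this is precisely where that hypothesis enters — so the coefficient factors out and $\int\zeta'v'=\int(\psi'_{xx}+\psi'_{yy})\psi'_x=0$ after an integration by parts in $y$; the Reynolds flux term $\int\zeta'\,\partial_y\overline{(v'\zeta')}$ vanishes since $\overline{(v'\zeta')}$ depends only on $y$ and $\overline{\zeta'}\equiv 0$; and the eddy self-advection $\int\zeta'(u'\zeta'_x+v'\zeta'_y)=\int(u',v')\cdot\nabla(\tfrac12\zeta'^2)=0$ because $(u',v')=(-\psi'_y,\psi'_x)$ is divergence free. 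What remains is $\tfrac12\tfrac{d}{dt}\|\zeta'\|_{L^2_{xy}}^2=-\nu\|\nabla\zeta'\|_{L^2_{xy}}^2\le 0$, hence $\|\gamma(t)\|_{L^2_{xy}}\le\|\gamma_0\|_{L^2_{xy}}$ for all $t\ge 0$. These manipulations are rigorous for smooth solutions; since the eddy equation is semilinear parabolic (principal part $\nu\nabla^2$, the operator $\partial_x\nabla^{-2}$ being of order $-1$), solutions issuing from $L^2$ data are smooth for $t>0$ and the bound extends to $t=0$ by continuity, using uniqueness from Section 3.

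For $\mu$ no analogous monotone quantity exists, since the Reynolds stress can transfer kinetic energy from the eddies into the mean flow; so I would instead use \eqref{energy} together with the conservation of $c_0=\tfrac1l\int_0^l\overline{u}\,dy$. Because $\mu=\overline{u}-c_0$ with $\widehat{\mu}(0)=0$, one has $\|\mu(t)\|_{L^2_y}^2=\|\overline{u}(t)\|_{L^2_y}^2-l\,c_0^2=l\langle\overline{u}^2(t)\rangle-l\,c_0^2$, and the same identity at $t=0$. Discarding the nonnegative eddy kinetic energy on the left of \eqref{energy} gives $l\langle\overline{u}^2(t)\rangle\le l\langle|\nabla\psi_0|^2\rangle=\|\overline{u}_0\|_{L^2_y}^2+l\langle|\nabla\psi'_0|^2\rangle$, hence $\|\mu(t)\|_{L^2_y}^2\le\|\mu_0\|_{L^2_y}^2+l\langle|\nabla\psi'_0|^2\rangle$. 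Finally $\nabla\psi'_0=\nabla\nabla^{-2}\zeta'_0$, so by the Poincaré inequality for the mean-zero function $\zeta'_0$ (the smallest frequency on $\mathbb{T}^2_l$ being $2\pi/l$) one gets $\langle|\nabla\psi'_0|^2\rangle\lesssim\langle(\zeta'_0)^2\rangle\sim l^{-2}\|\gamma_0\|_{L^2_{xy}}^2$, whence $\|\mu(t)\|_{L^2_y}\lesssim\|\mu_0\|_{L^2_y}+\|\gamma_0\|_{L^2_{xy}}$ with a constant depending only on $l$. Together with the bound on $\gamma$, this controls $\|\mu(t)\|_{L^2_y}+\|\gamma(t)\|_{L^2_{xy}}$ uniformly in $t$ by the corresponding norm of the initial data, which is exactly what is needed to reapply the local theory in Step 4.

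The one delicate point is the $\mu$-estimate: unlike for $\gamma$, there is no quantity built from $\mu$ alone that is monotone in time, so one must spend the full energy balance and convert the remaining eddy kinetic energy into eddy enstrophy through Poincaré (which is why $\|\gamma_0\|$ appears). The $\gamma$-estimate is then little more than bookkeeping, once one notices that all transport and flux terms integrate to zero and that the constancy of $\overline{u}_{yy}$ kills the $\beta$-term.
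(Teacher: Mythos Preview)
Your argument is correct and suffices for the iteration in Step~4. The route you take differs from the paper's in the treatment of $\gamma$: the paper never tests the \emph{eddy} equation against $\zeta'$, but instead multiplies the full vorticity equation \eqref{vorticityeq} by $\zeta$ to obtain the total-enstrophy decay $\langle\zeta^2\rangle\le\langle\zeta_0^2\rangle$, and then splits $\zeta=-\overline{u}_y+\zeta'$ so that $\langle\zeta'^2\rangle\le\langle\zeta_0^2\rangle$. Your direct computation on the eddy equation is more work---you must check that each transport, $\beta$, and flux term integrates to zero---but it yields the sharp bound $\|\gamma(t)\|_{L^2_{xy}}\le\|\gamma_0\|_{L^2_{xy}}$ exactly as stated, whereas the paper's splitting leaves a residual $\langle\overline{u}_{0,y}^2\rangle$ on the right. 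For $\mu$ your route is the paper's: the total energy inequality \eqref{energy} followed by the mean/eddy decomposition. You are right to flag that neither argument actually delivers $\|\mu(t)\|_{L^2_y}\lesssim\|\mu_0\|_{L^2_y}$ in isolation, only $\|\mu(t)\|_{L^2_y}\lesssim\|\mu_0\|_{L^2_y}+\|\gamma_0\|_{L^2_{xy}}$; this is also what the paper's computation gives, and it is all that the globalisation step requires.
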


\subsection{Global Well-Posedness}
In this section we complete the proof of Theorem \ref{theoprincipale}. In Section 3, we established a priori upper bounds for the solution over a finite time interval and proved local well-posedness in time and space using the contraction theorem. We also obtained spatial extension of the solution through Proposition \ref{estensionesoluzionesystem}. The final step to be addressed is time extension.

More specifically, we proved that the problem \eqref{sistemariscritto} has a unique solution on 
\begin{equation*}
    {X^{s,\delta}}=\left\{(f,g)\in L^\infty_\delta(H^s_y\times H^s_{xy});\widehat{f}(0)=0,\ \widehat{g}(0)=0\right\},
\end{equation*}
with 
\begin{equation*}
    s\geq 0 \text{ and } \delta\sim(\norm{\gamma_0}_{H^s_{xy}}+\norm{\mu_0}_{H^s_y})^{-\frac{1}{1-\alpha}}\ \ \alpha\in\left(\frac{3}{4},1\right),
\end{equation*}
which we can explicitly write as
\begin{align}
    (\mu,\gamma)=\bigg(e^{\nu t\partial_{yy}}&\mu_0+\int_0^t e^{(-\nu\partial_{yy})(t-t')}G(\gamma)\ dt',\notag\\
    e^{\tilde{D}^2t}&\left.\gamma_0+\int_0^t e^{\tilde{D}^2(t-t')}F(\mu,\gamma)\ dt'\right)\label{equafin}
\end{align}
where $\tilde{D}^2=\nu\nabla^2-\partial_x\nabla^{-2}$. We are now ready to close the proof of Theorem \ref{theoprincipale}.

\begin{proof}[Proof of Theorem \ref{theoprincipale}]
We start by choosing  
\begin{equation}\label{deltariscritto}
    \delta\sim(\norm{\gamma_0}_{L^2_{xy}}+\norm{\mu_0}_{L^2_y})^{-\frac{1}{1-\alpha}}
\end{equation}
instead of \eqref{explicitdelta}.  
By previous computations we have existence and uniqueness of the solution to \eqref{sistemariscritto} on $X^{s,\delta}$ with $\delta$ as in \eqref{deltariscritto}.
Moreover, by Proposition \ref{initialdatabound}, we have 
\begin{equation}\label{normalimitata}
    \norm{\gamma(t)}_{L_{xy}^2}+\norm{\mu(t)}_{L_y^2}\leq \left(\norm{\gamma_0}_{L^2_{xy}}+C\norm{\mu_0}_{L_y^2}\right)\ \ \ \forall\ t\geq 0.
\end{equation}
Then, if we set as initial data
\begin{equation*}
    \left(\mu(\delta),\gamma(\delta)\right),
\end{equation*}
we can use the same techniques as in previous sections to establish the existence and uniqueness of a solution on $X^{s,[\delta,\delta+\delta^*]}$ with a new parameter, which we call $\delta^*$, of the following form 
\begin{equation}\label{deltastar}
    \delta^*\sim(\norm{\gamma(\delta)}_{L^2_{xy}}+\norm{\mu(\delta)}_{L^2_y})^{-\frac{1}{1-\alpha}}.
\end{equation}
Due to \eqref{normalimitata}, 
\begin{equation*}
    \delta\leq \delta^*,
\end{equation*}
and in particular we have well-posedness of \eqref{sistemariscritto} on $X^{s,[\delta,2\delta]}$. Iterating in this way, we obtain the solution on $X^{s}$.
\end{proof}

% ------------------------------------------------------------------------

\subsection*{Acknowledgment}
The author would like to thank G. Staffilani and R. Ferrari for suggesting the problem and for many useful conversations during the preparation of this paper. The author would also like to thank MIT for its hospitality.

\end{document}